\newtheorem{theorem}{Theorem}
\newtheorem*{theorem*}{Theorem}
\newtheorem{definition}{Definition}
\newtheorem{algorithm}[theorem]{Algorithm}
\newtheorem{proposition}[theorem]{Proposition}
\newtheorem*{prop*}{Proposition}
\newtheorem*{conj*}{Conjecture}
\newtheorem{lemma}[theorem]{Lemma}
\newtheorem*{lem*}{Lemma}
\newtheorem{cor}[theorem]{Corollary}
\newtheorem*{cor*}{Corollary}
\numberwithin{equation}{section}
\renewcommand{\tilde}[1]{\widetilde{#1}}%
\newcommand{\N}{\mathbb N}
\newcommand{\Q}{\mathbb Q}
\newcommand{\Z}{\mathbb Z}
\newcommand{\R}{\mathbb R}
\renewcommand{\H}{\mathbb{H}}
\newcommand{\MM}{\mathcal M}
\newcommand{\F}{\mathbb{F}}
\newcommand{\llangle}{\langle\! \langle}
\newcommand{\rrangle}{\rangle\! \rangle}
\newcommand{\tOmega}{\tilde{\Omega}}
\newcommand{\SSS}{\mathcal{S}}
\newcommand{\AAA}{\mathcal{A}}
\newcommand{\gplus}{\gamma_{\infty}}
\newcommand{\gneg}{\gamma_{-\infty}}
\newcommand{\balpha}{\bar{\alpha}}
\newcommand{\e}{\epsilon}
\newcommand{\x}{\xi_\gamma}
\newcommand{\y}{\eta_\gamma}
\newcommand{\PSL}{\operatorname{PSL}(2,\Z)}
\title{Geodesic flows and the mother of all continued fractions}
\author{Claire Merriman}
\begin{document}
\maketitle
\begin{abstract}
We extend the Series' \cite{Ser} connection between the modular surface $\MM=\PSL\backslash\H$, cutting sequences, and regular continued fractions to the slow converging Lehner and Farey continued fractions with digits $(1,+1)$ and $(2,-1)$ in the notation used for the Lehner continued fractions. We also introduce an alternative insertion and singularization algorithm for Farey expansions and other non-semiregular continued fractions, and an alternative dual expansion to the Farey expansions so that $\frac{dxdy}{(1+xy)^2}$ is invariant under the natural extension map.

\end{abstract}
\section{Introduction}

The connection between the geodesics on the modular surface $\MM=\PSL\backslash\H$ and ergodic theory allows us to use geometry to prove dynamic and number theoretic properties of the continued fraction map and to use continued fractions to classify geodesics on the modular surface.
Series \cite{Ser} established explicit connections between a subset of the geodesic flow on $\H$, a geodesic coding by cutting sequences, and regular continued fraction dynamics using a cross section of the geodesic flow on $T_1\MM$. We apply the same geodesic coding, but different cross section, to describe the slow Lehner and Farey expansions. Lehner \cite{Leh} introduced the Lehner expansions on $[1,2)$ using the farthest integer map, and Dajani and Kraaikamp \cite{DK2} introduced the Farey expansions as a dual continued fraction expansion to the Lehner expansions.

The group $\PSL$ acts on the upper half plane $\H=\{x+iy:y>0\}\cup\{\infty\}$ by M\"obius transformations $\left(
\begin{smallmatrix}
 a&b\\c&d
\end{smallmatrix}\right)z=\frac{az+b}{cz+d}$ which preserve the hyperbolic metric $ds^2=\frac{dx^2+dy^2}{y^2}$.  We also define the Farey tessellation $\F$ made of ideal hyperbolic triangles whose edges are the images of $i\R$ under the $\PSL$ action.

\begin{figure}
 \includegraphics[width=\textwidth]{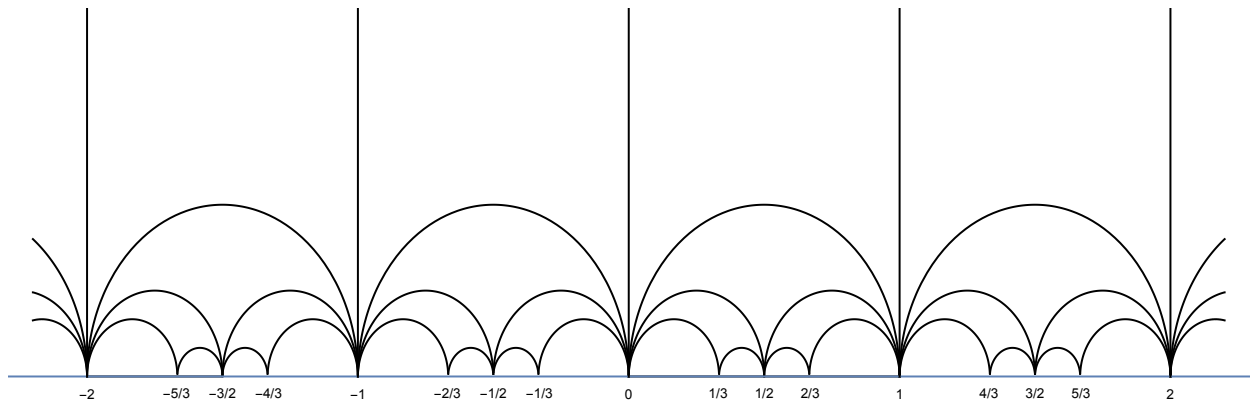}
 \caption{The edges of the ideal triangles are images of $i\R$ under the $\PSL$ action on $\H$. The edges connect two rational numbers  if and only if they are adjacent in some Farey sequence $F_n=\{\frac{p}{q}:0\leq q\leq n\}$. Here is the Farey tessellation up to $n=3$.}
\end{figure}

The tessellation $\F$ breaks a geodesic $\gamma$ on $\H$ into segments with one segment for each triangle the geodesic crosses. The segment crosses two sides of the triangle, and we label it $L$ or $R$, according to whether the vertex shared by the sides is to the left or right of the geodesic. This labeling is invariant under the $\PSL$ action and corresponds to whether the projection of $\gamma$ to $\PSL\backslash\H$ wraps counterclockwise or clockwise around the cusp of $\MM$.  These geodesics $\gamma$ are lifts of geodesics $\bar\gamma$ on $\MM$, where $\overline{\gamma}$ are uniquely determined by
infinite two-sided cutting sequences $\dots L^{n_{-1}}R^{n_0}L^{n_1}\dots$.  This sequence of positive integers $(n_i)_{i=-\infty}^\infty$ gives the regular continued fraction expansion of the forward and backwards endpoints of some lift $\gamma$
\begin{equation}
 \gplus= n_0+\cfrac{1}{n_1+\cfrac{1}{n_2+\dots}}=[n_0;n_1,n_2,\dots], \gneg=\displaystyle \cfrac{-1}{n_{-1}+\cfrac{1}{n_{-2}+\dots}}=-[n_{-1},n_{-2},n_{-3},\dots].
\end{equation}

 Shifting along the cutting sequences corresponds to a two-fold cover of the natural extension of the regular Gauss map $\overline T$ on $[0, 1)^2$ defined by 
 \[
 \overline T([n_0;n_1,n_2,\dots],[n_{-1},n_{-2},\dots])=([n_1;n_2,n_3,\dots],[n_0,n_{-1},n_{-2},\dots]).
 \]
The slow down of this shift corresponds to the natural extension of the Lehner Gauss map.
Some different approaches for coding the geodesic flow on $T_1\MM$ were considered by Arnoux in \cite{Arn}, Katok and Ugarcovici in \cite{KU1}, and Moeckel in \cite{Moe}. Heersink \cite{Hee} also considered the geodesic flow on $\MM$ to classify the distribution of periodic points of the Farey tent map, which is conjugate to the Lehner Gauss map by $x\mapsto x+1$. 

This paper describes the Lehner expansions and their dual continued fraction expansions using the geodesic flow on $T_1 \MM$ as follows:

\begin{theorem}\label{thm1}[Section \ref{cut}] Our classification of geodesics $\bar\gamma$ on $T_1\MM$ with cutting sequence $\dots L^{n_{-1}}R^{n_0}L^{n_1}\dots$ depends on whether or not $n_0=1$. In the one line notation given in \eqref{lehnerdef} and \eqref{fareydef}, if $n_0=1$, $\bar\gamma$ has a lift on $\H$ with forward endpoint 
 \[\gplus =[\![(2,-1)^{n_1-1}(1,+1)(2,-1)^{n_2-1}(1,+1)\dots]\!]
 \]
 and backwards endpoint 
 \[\gneg =\llangle(+1/1)(-1/2)^{n_{-1}}(+1/1)(-1/2
)^{n_{-2}-1}\dots\rrangle
 \] when $n_{-1}\geq 2$ and
  \[\gneg  =\llangle(+1/1)(+1/1)(-1/2)^{n_{-1}-1}(+1/1)(-1/2)^{n_{-2}-1}\dots\rrangle
 \] when $n_{-1}=1$. 
 When $n_0>1$, $\bar\gamma$ has a lift on $\H$ with endpoints
 \[\gplus =[\![(2,-1)^{n_1-1}(1,+1)(2,-1)^{n_2-1}(1,+1)\dots]\!],\]\[
\gneg =\llangle(+1/1)(-1/2)^{n_{-1}}(+1/1)(-1/2)^{n_{-2}-1}\dots\rrangle.
 \]
\end{theorem}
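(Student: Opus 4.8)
The plan is to reduce the whole statement to Series' dictionary \cite{Ser} between a cutting sequence and the regular continued fraction (RCF), and then to convert the RCF data into the Lehner and Farey alphabets by an explicit change of digits. First I would record, following \cite{Ser}, that the geodesic $\bar\gamma$ with cutting sequence $\dots L^{n_{-1}}R^{n_0}L^{n_1}\dots$ has a lift $\gamma$ with endpoints $\gplus=[n_0;n_1,n_2,\dots]$ and $\gneg=-[n_{-1},n_{-2},\dots]$, and that passing to a different lift of $\bar\gamma$ amounts to applying one element of $\PSL$ to both endpoints at once. In this language the theorem says that, after choosing the lift that places $\gamma$ on the cross section adapted to the Lehner map, the future of the cutting sequence reads off as the Lehner expansion of $\gplus$ and the past reads off as the dual Farey expansion of $\gneg$.

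The algebraic core is a pair of block identities. Writing the Lehner digits $(1,+1)$ and $(2,-1)$ of \eqref{lehnerdef} as the M\"obius maps $y\mapsto 1+1/y$ and $y\mapsto 2-1/y$, a direct matrix multiplication shows that the block $(2,-1)^{n-1}(1,+1)$ acts on the tail value of the next block by
\[
 y\longmapsto 1+\cfrac{1}{(n-1)+y},
\]
so substituting successive tails telescopes the word $(2,-1)^{n_1-1}(1,+1)(2,-1)^{n_2-1}(1,+1)\dots$ to the RCF $[1;n_1,n_2,\dots]$, a point of $[1,2)$. Dually, writing the Farey digits $(+1/1)$ and $(-1/2)$ of \eqref{fareydef} as their M\"obius maps, the block $(+1/1)(-1/2)^{m}$ acts as $t\mapsto m+1/(t+1)$, so the word $(+1/1)(-1/2)^{n_{-1}}(+1/1)(-1/2)^{n_{-2}-1}\dots$ telescopes to $[n_{-1};n_{-2},n_{-3},\dots]$, the dual coordinate $-1/\gneg$ that the bracket $\llangle\,\cdot\,\rrangle$ records as $\gneg$. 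Thus each RCF partial quotient $n_i$ is realized by inserting exactly $n_i-1$ copies of the neutral parabolic digit before a single switching digit; this is the insertion/singularization correspondence between the slow Lehner--Farey alphabet and the fast RCF alphabet.

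With the two identities in hand I would fix the lift. The forward identity already returns a value in $[1,2)$ that is independent of $n_0$, matching the fact that the Lehner expansion of $\gplus$ in the theorem never mentions $n_0$: the normalizing element of $\PSL$ that carries $\gamma$ onto the Lehner cross section absorbs the integer part of $\gplus$, and the measure $\frac{dxdy}{(1+xy)^2}$ pins down the corresponding range of the dual coordinate. The case split then comes from tracking what this same normalization does to the past word. The distinguished crossing used to normalize $\gamma$ lies in a cell of the cross section determined by $n_0$: when $n_0>1$ the past is read exactly as $(+1/1)(-1/2)^{n_{-1}}(+1/1)(-1/2)^{n_{-2}-1}\dots$; when $n_0=1$ the present block is a single $R$, the crossing falls in a different cell, and the leading Farey digits change; if moreover $n_{-1}=1$ the first past block $(+1/1)(-1/2)^{n_{-1}}$ degenerates and must be rewritten as $(+1/1)(+1/1)(-1/2)^{n_{-1}-1}$, which is the correction recorded in the statement. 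I would justify each reading by comparing the accumulated M\"obius images of the cross-section boundary in the relevant cell.

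The main obstacle is exactly this synchronization of a single lift with both expansions, together with the boundary bookkeeping. The two delicate points are: showing that one element of $\PSL$ simultaneously sends $\gplus$ into $[1,2)$ and $\gneg$ into the dual domain (so that the future and past of one bi-infinite word produce the two stated expansions); and verifying admissibility, namely that the configurations $n_0=1$ and $n_0=n_{-1}=1$ are the only ones where the naive block reading fails and that the replacement of $(+1/1)(-1/2)$ by $(+1/1)(+1/1)$ is forced there. The telescoping identities themselves are routine once the digit maps are set up; it is the geometric placement of the cross section and the identification of the two exceptional cells where the real work lies.
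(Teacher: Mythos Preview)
Your route is different from the paper's and is workable in principle, but it is heavier than what the paper actually does, and a couple of your load-bearing claims are off.

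The paper does not pass through Series' RCF dictionary and then convert via block identities. Instead it gives a one-line combinatorial rule read directly off the cutting sequence at the marked crossing $\xi_\gamma$ on $\pm[1,2]$: moving forward (for the Lehner side) or backward (for the Farey side), each letter that repeats its predecessor contributes a $(2,-1)$ digit, and each letter that differs contributes a $(1,+1)$ digit. The case analysis is then just a matter of where $\xi_\gamma$ sits relative to the block $\ldots L^{n_{-1}}R^{n_0}L^{n_1}\ldots$, which is what produces the split on $n_0$ and the sub-split on $n_{-1}$. No renormalizing $\PSL$ element has to be found, because the lift is pinned down by $\xi_\gamma$ from the start; the formulas in \eqref{leh_insert} and Section~\ref{far_insert} are invoked only as a consistency check, not as the mechanism of proof. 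Your telescoping identities for $(2,-1)^{n-1}(1,+1)$ and $(+1/1)(-1/2)^m$ are correct and are essentially what underlies that check.

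Two concrete problems with your write-up. First, the bracket $\llangle\,\cdot\,\rrangle$ records the value of the Farey continued fraction itself, not a ``dual coordinate $-1/\gneg$''; by your own block identity the word $(+1/1)(-1/2)^{n_{-1}}(+1/1)(-1/2)^{n_{-2}-1}\ldots$ evaluates to $[n_{-1};n_{-2},\ldots]$, so the lift asserted in the theorem has $\gneg=[n_{-1};n_{-2},\ldots]$, which is \emph{not} the Series lift $\gneg=-[n_{-1},n_{-2},\ldots]$. Your plan of ``use Series' lift, then read off both expansions'' therefore cannot land on the stated $\gneg$ without an explicit $\PSL$ move that you never produce; that move changes $\gplus$ as well, so the synchronization you flag as ``delicate'' is not optional but is the whole content of the $n_0$-dependence. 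Second, the measure you appeal to, $\tfrac{dxdy}{(1+xy)^2}$, belongs to the alternate dual of Section~\ref{alt_leh}; the natural extension $\mathcal L$ relevant here has density $\tfrac{dxdy}{(x+y)^2}$ (Corollary~\ref{Lmeasure}), and in any case the invariant measure plays no role in identifying the lift. Finally, your reading of the case split is slightly off: the theorem gives the \emph{same} Farey word for $n_0>1$ and for $n_0=1$ with $n_{-1}\ge2$; the only genuinely exceptional past is $n_0=n_{-1}=1$.
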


Dajani and Kraaikamp \cite{DK2} call the Lehner expansions the \emph{mother of all semiregular continued fractions} because insertion and singularization algorithms described in Section \ref{leh} take the Lehner expansions to any other semiregular continued fraction. In Section \ref{far_insert}, we define an alternate insertion and singularization algorithm that converts between the Farey expansions and non-semiregular continued fractions.  We also describe the geodesic flow on the modular surface and its tangent space in Section \ref{surface} using a nonstandard fundamental and a different cross section of the geodesic flow on $T_1 \MM$. The connection between Lehner and Farey expansions and the cutting sequences is in Section \ref{cut}. We describe several results that follow from this construction in Section \ref{application} and give an alternate dual continued fraction expansion in Section \ref{alt_leh}.

\section{Lehner expansions}\label{leh}

Lehner \cite{Leh} showed that every irrational number $x\in[1,2)$ has a unique continued fraction expansion of the form 
\begin{equation}\label{lehnerdef}x=a_0+\cfrac{\e_0}{a_1+\cfrac{\e_1}{a_2+\cfrac{\e_2}{\dots}}}=[\![(a_0,\e_0)(a_1,\e_1)(a_2,\e_2)\dots]\!],
\end{equation}
where $(a_i,\e_i)\in\{(2,-1),(1,+1)\}$. Every rational number has two or four finite expansions corresponding to the fact that $2-\tfrac{1}{1}=1$ and $2-\frac{1}{1+\tfrac{1}{1}}=2-\frac{1}{2}=1+\frac{1}{2}=1+\frac{1}{1+\tfrac{1}{1}}$. Lehner \cite{Leh} requires that the final $\frac{\e_{i-1}}{a_i}=\frac{+1}{2}$ for unique finite expansions of rational numbers. These continued fractions are generated by the transformation 
\[L:[1,2)\to[1,2),\quad
L(x):=
\begin{cases}
 \frac{1}{2-x} &\textnormal{if } x\in[1,\frac{3}{2}),\\
  \frac{1}{x-1} & \textnormal{it }x\in[\frac{3}{2},2)
\end{cases}
\]
called the \emph{Lehner Gauss map}.
We get $(a_i,\e_i)=
\begin{cases}
(2,-1) & \textnormal{if } L^{i}(x)\in[1,\frac{3}{2}),\\
 (1,+1) & \textnormal{if } L^{i}(x)\in[\frac{3}{2},2).
\end{cases}$

The map $L$ is conjugate to the Farey map \[\tau:[0,1)\to[0,1),\quad
\tau(x)=\begin{cases}
 \frac{x}{1-x} & \textnormal{if } x\in[0,\frac{1}{2})\\
  \frac{1-x}{x} & \textnormal{if } x\in[\frac{1}{2},1),
\end{cases}
\]
as $L=T\circ \tau\circ T^{-1}$ where $T(x)=x+1$.
Ito \cite{Ito} showed that $\tau$ is ergodic and has $\sigma$-finite invariant measure $\frac{dx}{x}$. As a result, $L$ is ergodic with $\sigma$-finite invariant measure $\frac{dx}{x-1}$.

Dajani and Kraaikamp \cite{DK2} describe the insertion and singularization algorithm to convert from a regular continued fraction expansion 
\[x=1+\cfrac{1}{n_1+\cfrac{1}{n_2+\dots}}=[1;n_1,n_2,\dots], \quad n_i\in\N,
\] to the corresponding Lehner continued fraction. By repeatedly applying this algorithm, we get
\begin{equation}\label{leh_insert}
[1;n_1,n_2,\dots]=[\![(2,-1)^{n_1-1}(1,+1)(2,-1)^{n_2-1}(1,+1)\dots]\!],
\end{equation}
where $(2,-1)^t$ means the digit $(2,-1)$ appears $t$ times, and $(2,-1)^0$ is the empty word. Note that when $n_1=1$, we get 
$[1;1,n_2,\dots]=
[\![(1,+1)(2,-1)^{n_2-1}(1,+1)\dots]\!]$. Looking at the finite expansions again, this rule gives
\begin{equation*}
\begin{split}
 1+\frac{1}{n}&= [\![(2,-1)^{n-1}(1,+1)]\!]= [\![(2,-1)^{n-2}(1,+1)(1,+1)]\!]=1+\cfrac{1}{n-1+\frac{1}{1}},\quad n\geq 2,
 \end{split}
\end{equation*}
corresponding to $[\![(2,-1)(1,+1)]\!]=2-\frac{1}{1+1}=1+\frac{1}{1+1}=[\![(1,+1)(1,+1)]\!]$. However, we can replace the final $(1,+1)(1,+1)$ with $(2,-1)$, producing four possible expansions.

\section{Farey continued fractions}\label{far_insert}

Dajani and Kraaikamp \cite{DK2} call the dual continued fraction expansion the Farey continued fraction expansion. These continued fractions have the form 
\begin{equation}
\label{fareydef}y=\cfrac{f_0}{b_0+\cfrac{f_1}{b_1+\dots}}=\llangle(f_0/b_0)(f_1/b_1)(f_2/b_2)\dots\rrangle
\end{equation}
where $(f_i/b_i)\in\{(-1/2),(+1/1)\}$ for $y\in[-1,\infty)$. The Gauss map for the Farey expansions is \[F(x):=
\begin{cases}
 \frac{-1}{x} -2&\textnormal{if }  x\in\left[-1,0\right)\\
 0 &\textnormal{if }  x=0\\
  \frac{1}{x}-1 &\textnormal{if }  x\in\left(0,\infty\right),
\end{cases}
\] 
where $(f_i/b_i)=\begin{cases}
(-1/2) & \textnormal{if } F^{i}(x)\in [-1,0),\\
(+1/1) & \textnormal{if } F^{i}(x)\in (0,\infty).
\end{cases}$ 

Dajani and Kraaikamp \cite{DK2} show that $F$ is ergodic with $\sigma$-finite invariant measure with density $\frac{1}{x+1}-\frac{1}{x+2}=\frac{1}{(x+1)(x+2)}$.
Converting from regular to Lehner continued fractions uses an insertion algorithm based on the identity
\begin{equation}\label{insert}
 A+\frac{\e}{B+\xi}=A+\e+\cfrac{-\e}{1+\cfrac{1}{B-1+\xi}},
\end{equation}
as described by Kraaikamp in \cite{Kr}. Kraaikamp's insertion and singularization algorithm works for semi-regular continued fractions,  where for every $b_i+\frac{f_{i+1}}{b_{i+1}+\dots}, b_i+f_{i+1}\geq 2$.  In other words, this algorithm does not allow $\frac{1}{1-\xi}$, which occurs in the Farey continued fractions when $(f_i/b_i)(f_{i+1}/b_{i+1})=(+1/1)(-1/2)$. Thus, we need to define a new insertion algorithm based on the identity 
\begin{equation}\label{alt_insert}
 A+\frac{\e}{B+\xi}=A-\e+\cfrac{\e}{1-\cfrac{1}{B+1+\xi}}.
\end{equation}

We use this new insertion and singularization algorithm to generate the Farey expansions. In the process of generating the Farey expansions, we will get intermediate steps that are not Farey expansions.  
\begin{algorithm}
 Let $-1<y$ with regular continued fraction expansion $\pm[n_0;n_{-1},n_{-2},\dots]$, where $n_0=0$ when $-1<y<1$. Then the following algorithm produces the Farey expansion of $y$.
 
\begin{enumerate}
\item Let $-1<y<0$. If $n_{-1}=2$, move to $n_{-2}$. 
\begin{enumerate}
\item If $n_{-1}=1$, use identity \eqref{insert}. If $n_{-2}=1,$ we start on the right hand side of \eqref{insert} with $A+\e=1, -\e=+1,$ and $B-1=n_{-3}$ to remove the digit $(+1/1).$ This yields
\[1+\cfrac{1}{1+\cfrac{1}{n_{-3}+\dots}}=2-\cfrac{1}{n_{-3}+1+\dots}.\] 
If $n_{-2}>1$, we start on the left hand side of \eqref{insert} with $A=1, \e=+1,$ and $B=n_{-2}$ \[1+\cfrac{1}{n_{-2}+\dots}=2-\cfrac{1}{1+\cfrac{1}{n_{-2}-1+\dots}},\]
which gives the first digit $(-1/2)$ and begins the process of converting $(1/n_{-2})$ to a Farey digit.
\item If $n_{-1}>2$, use the identity \eqref{alt_insert} with $A=n_{-1}, \e=+1$, and $B=n_{-2}$ to get \[n_{-1}+\cfrac{1}{n_{-2}+\dots}=n_{-1}-1+\cfrac{1}{1-\cfrac{1}{n_{-2}+1+\dots}}.\]
Applying the identity \eqref{alt_insert} repeatedly gives \[-[0;n_{-1},n_{-2},\dots]=\llangle(-1/2)(1/1)(-1/2)^{n_{-1}-2}(-1/(n_{-2}+1))\dots\rrangle.\] 
\end{enumerate}

\item Let $0<y$ and $k$ be the first index where $n_k>1$. Use identity  \eqref{alt_insert} to get \[n_{k}+\cfrac{1}{n_{k-1}+\dots}=n_{k}-1+\cfrac{1}{1-\cfrac{1}{n_{k-1}+1+\dots}}.\]
Applying the identity repeatedly gives 
\begin{equation*}
\begin{split}
 [0;1,...1,n_{k},n_{k-1},\dots]=\llangle(1/1)^{k+1}(-1/2)^{n_{k}-1}(-1/(n_{k-1}+1))\dots\rrangle,\\
 [1;1,...1,n_{k},n_{k-1},\dots]=1+\llangle(1/1)^{k}(-1/2)^{n_{k}-1}(-1/(n_{k-1}+1))\dots\rrangle,
\end{split}
\end{equation*}
since there were already $k-1$ copies of $(1/1)$ and we inserted two more. 

\item For $y>1$, apply (2) then use identity \ref{alt_insert} to get \[1+\cfrac{1}{1+z}=\cfrac{1}{1- \cfrac{1}{2+z.}}\]
\end{enumerate}
Repeat with the next digit that is not $(1/1)$ or $(-1/2)$.
\end{algorithm}

Summarizing, we have two cases to consider, corresponding to the intervals $(-1,0),$ $(0,\infty)$.

\begin{description}
 \item[(a) $0<y$] $y=n_{0}+\cfrac{1}{n_{-1}+\cfrac{1}{n_{-2}+\dots}}=\llangle(1/1)(-1/2)^{n_{0}}(1/1)(-1/2)^{n_{-1}-1}\dots\rrangle$, where $n_0=0$ when $0<y<1$.

 \item[(b) $-1<y<0$] 
 \[\hspace{-3em}y=
 \cfrac{-1}{n_{-1}+\cfrac{1}{n_{-2}+\dots}} \\=
\begin{cases}
  \llangle(-1/2)^{n_{-2}+1}(1/1)(-1/2)^{n_{-3}-1}\dots\rrangle & \textnormal{if }n_{-1}=1\\
  \llangle(-1/2)(1/1)(-1/2)^{n_{-1}-2}(1/1)(-1/2)^{n_{-2}-1}\dots\rrangle& \textnormal{if } n_{-1}>1

\end{cases}\]

\end{description}

\noindent where $(-1/2)^t$ means the digit $(-1/2)$ appears $t$ times, and $(-1/2)^0$ is the empty word, as in the Lehner case. An irrational number $x$ has an eventually periodic regular continued fraction expansion if and only if it is the root of a rational quadratic equation, called a quadratic irrational. Combining this fact with the above algorithm gives

\begin{cor}\label{farey_quad}
If $x$ is a quadratic irrational, then its Farey expansion is eventually periodic.
\end{cor}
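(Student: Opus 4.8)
The plan is to read eventual periodicity of the Farey expansion straight off the eventual periodicity of the regular continued fraction, using the summary formulas (a) and (b) that the algorithm produces. By Lagrange's theorem, the regular continued fraction expansion of a quadratic irrational $x$ (lying in the domain $[-1,\infty)$ on which the Farey map $F$ is defined) is eventually periodic, so it suffices to show that the digit-by-digit rule converting a regular expansion into a Farey expansion sends eventually periodic digit strings to eventually periodic Farey words. In other words, I would recast the algorithm as a length-varying substitution and check that substitutions preserve eventual periodicity.

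Concretely, inspecting cases (a) and (b) above, after a finite initial segment the Farey word is obtained from the regular digit string $(n_i)$ by replacing each digit $n$ with the finite block $w(n):=(1/1)(-1/2)^{n-1}$ and concatenating, one block per regular digit. The key structural observation is that every such block begins with the symbol $(1/1)$ and its only other symbol is $(-1/2)$; hence the infinite Farey word decomposes \emph{uniquely} into these blocks, with the decomposition order preserving. This remains valid when $n=1$, since then $w(1)=(1/1)$ is still nonempty, so no positional alignment is lost. I would verify directly from the displayed formulas of (a) and (b) that the periodic \emph{tail} of the expansion is governed by this uniform rule $n\mapsto w(n)$ in both cases, the sign and first-digit conventions distinguishing (a) from (b) affecting only a bounded prefix (for instance the leading $(-1/2)(1/1)(-1/2)^{n_{-1}-2}$ in case (b), after which the pattern $(1/1)(-1/2)^{n-1}$ resumes exactly as in (a)).

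With this in hand the conclusion is immediate: if the regular expansion satisfies $n_{i+p}=n_i$ for all $i\ge N$ with period block $(m_1,\dots,m_p)$, then from the block indexed by $N$ onward the Farey word is the infinite repetition of the single fixed word $w(m_1)\cdots w(m_p)$, hence eventually periodic, with period length $\sum_{j=1}^{p} m_j$ in the Farey alphabet since $|w(n)|=n$. The finitely many transient regular digits contribute only a finite prefix. I expect the main obstacle to be precisely this bookkeeping: confirming that the algorithm's output genuinely stabilizes to the clean substitution $n\mapsto w(n)$ on the periodic tail (so that the differing conventions of cases (a) and (b), and the positive/negative range split, are confined to a finite prefix), and that the block decomposition is unambiguous so that periodicity of $(n_i)$ transfers to the Farey word without any phase defect or boundary misalignment.
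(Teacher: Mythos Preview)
Your proposal is correct and follows essentially the same approach as the paper: the paper states the corollary as an immediate consequence of Lagrange's theorem together with the explicit conversion formulas (a) and (b), and your write-up simply spells out the substitution $n\mapsto (1/1)(-1/2)^{n-1}$ that makes the transfer of eventual periodicity transparent. The bookkeeping you flag (that the case-dependent prefixes in (a) and (b) are finite and the uniform block rule governs the tail) is exactly what the paper is tacitly relying on.
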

In Section \ref{application}, we prove that the Farey expansion of an irrational number $x$ is eventually periodic if and only if $x$ is a quadratic irrational. Note that we need to specify irrational, since $-1=\llangle\overline{(-1/2)}\rrangle$.

The Farey continued fractions allow us to construct an invertible natural extension of the Lehner map $L$, $\mathcal{L}:[1,2)\times[-1,\infty)\to [1,2)\times[-1,\infty)$ defined by
\[\mathcal{L}(x,y)=\left( \frac{\e_0(x)}{x-a_0(x)},  \frac{\e_0(x)}{y+a_0(x)}\right)=
\begin{cases}
( \frac{-1}{x-2},  \frac{-1}{y+2})&  \textnormal{if }x\in[1,\frac{3}{2}),\\
( \frac{1}{x-1},  \frac{1}{y+1})&   \textnormal{if } x\in[\frac{3}{2},2).
\end{cases}
\] On the continued fraction expansions, $\mathcal{L}$ acts as the shift map 
\[\mathcal{L}\left([\![(a_0,\e_0)(a_1,\e_1)\dots]\!], \llangle(b_0,f_0)(b_1,f_1)\dots\rrangle\right)=\left([\![(a_1,\e_1)(a_2,\e_2)\dots]\!], \llangle(\e_0/a_0)(f_0/b_0)\dots\rrangle\right).
\] Dajani and Kraaikamp \cite{DK2} also showed that the invariant measure for $\mathcal{L}$ has density $\frac{1}{(x+y)^{2}}$.

It will be helpful to define $\Omega=[1,2)\times[-1,\infty)$ and consider the extension $\tilde{\mathcal{L}}$ of $\mathcal{L}$ to $\tilde{\Omega}:=\Omega\times\{-1,1\}$ defined by 
\[\tilde{\mathcal{L}}(x,y,\e):=(\mathcal{L}(x,y),-\e_0(x)\e).
\]

\section{Cutting Sequences and $\MM$}\label{surface}
\subsection{The group $\PSL$ and $\MM =\PSL\backslash\H$} 

We consider the group generated by $S(z)= \frac{2z-3}{z-1}=2-\frac{1}{z-1}$ and $T(z)=z+1.$ Since $T^{-2}ST(z)=\frac{-1}{z}$, this group is $\PSL$. We take
\begin{equation*}
{\mathfrak F}  =\left\{ z\in \H :1\leqslant\operatorname{Re}z\leqslant2,  \left| z-1 \right| \geqslant 1,  \left| z-2 \right| \geqslant 1\right\}
\end{equation*}
as the fundamental domain for $\MM=\PSL\backslash\H$, depicted in the image on the left of Figure \ref{domains}. This fundamental domain comes from applying $T$ to the fundamental domain Series \cite{Ser} used to describe the regular continued fractions with cutting sequences, which agrees with the fact that $L(x)$ is conjugate to the slow down of the regular Gauss map by $x+1$.
The element $S$ rotates $\H$ about $\frac{3+1\sqrt{2}}{2}$, taking the hyperbolic geodesic $[1+i, \tfrac{3+i\sqrt{3}}{2}]$ to the hyperbolic geodesic $[ \tfrac{3+i\sqrt{3}}{2},2+i]$ and $T$ takes $[1+i,\infty]$ to $[2+i,\infty]$.
The resulting quotient space $\MM=\pi(\H)$ is the usual modular surface, homeomorphic to a sphere with a cusp at $\pi(\infty)$, and cone points at $\pi(\tfrac{3+i\sqrt{3}}{2})$ and $\pi(1+i)$.

We take \begin{equation}\begin{split}\Delta&=ST(\mathfrak{F} \cup S(\mathfrak{F} )\cup S^2(\mathfrak{F} ))=\left\{ z\in \H :\left| z-\tfrac{3}{2}\right| \leqslant 1,  \left| z-\tfrac{5}{4}\right| \geqslant \tfrac{1}{4}, \left| z-\tfrac{7}{4}\right| \geqslant \tfrac{1}{4}\right\}\end{split}\end{equation}
to be the fundamental cell of a tessellation of $\H$ shown on the right of Figure \ref{domains}. It follows from the fact that $\PSL(i\R)$ gives the Farey tessellation (or B\'ezout's identity) that:

\begin{figure}
    \centering
    \includegraphics[width=.4\textwidth]{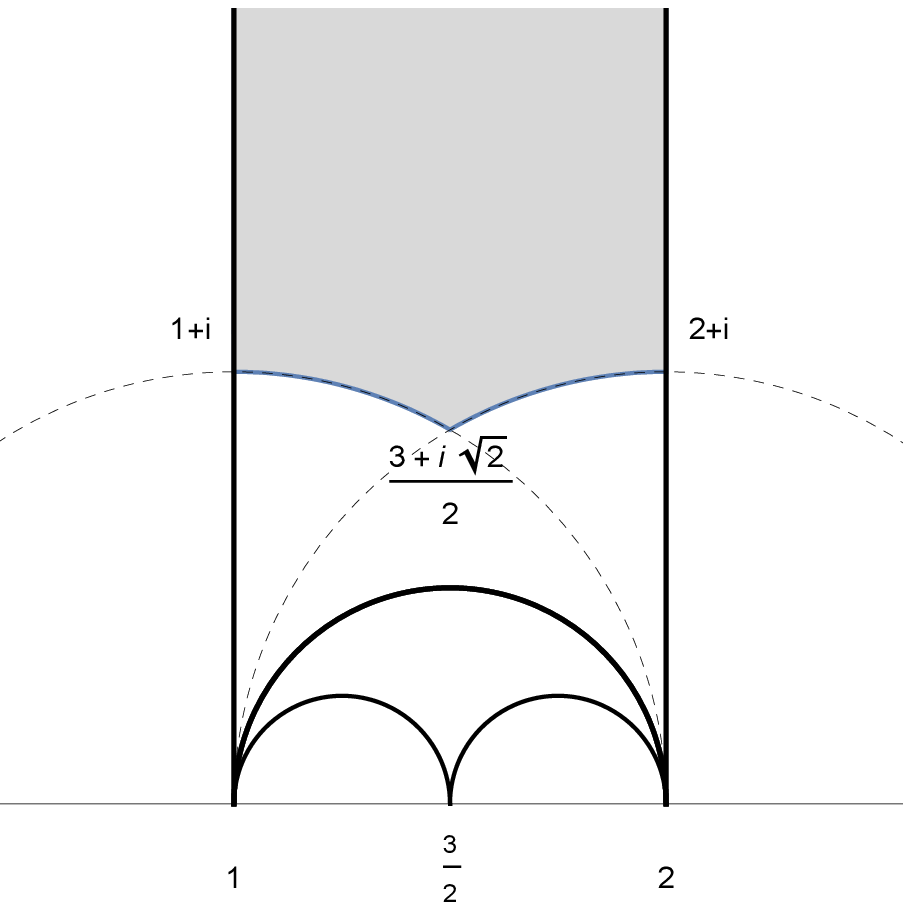}\qquad
    \includegraphics[width=.4\textwidth]{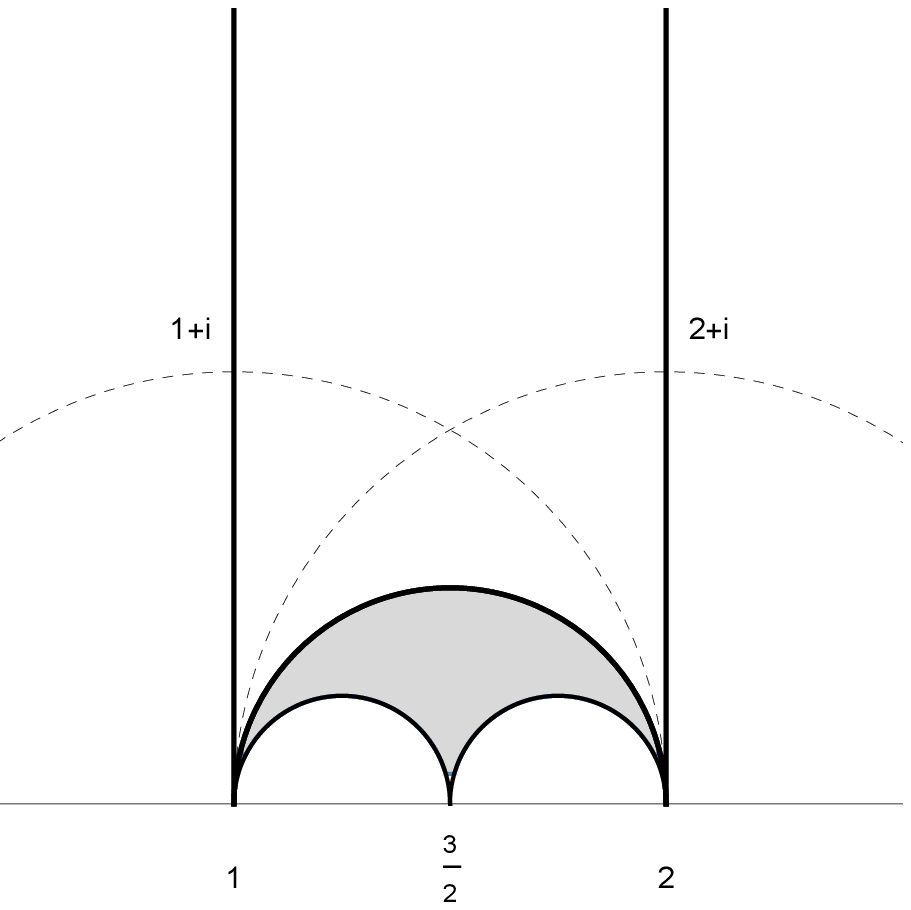}
    \caption{The fundamental domain ${\mathfrak F}$ is shown in grey in the image on the left. The fundamental Farey cell $\Delta$ is shown in grey on the right.}
    \label{domains}
\end{figure}

\begin{lemma}\label{orbit_0}
  $\PSL0=\Q\cup\{\infty\}$.
\end{lemma}

We denote by $\AAA $ the set of geodesics $\gamma$ in $\H$ with endpoints satisfying
\begin{equation*}
(\gplus,\gneg) \in
\SSS :=\left( (1,2) \times (-\infty,1) \right) \cup \left( (-2,-1) \times (-1,\infty)\right).
\end{equation*}

\begin{lemma}\label{geo_lifts_leh}
Every geodesic $\bar{\gamma}$ on $\MM$ lifts to $\H$ to a geodesic $\gamma \in \AAA $.
\end{lemma}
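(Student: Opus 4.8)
The plan is to work at the level of endpoint pairs. An oriented geodesic $\gamma$ in $\H$ is determined by $(\gplus,\gneg)$ in $\partial\H\times\partial\H$ off the diagonal, geodesics $\bar\gamma$ on $\MM$ correspond to $\PSL$-orbits of such pairs, and the assertion is exactly that every orbit meets $\SSS$. First I would record that $\Delta$ is the ideal triangle with vertices $1,\tfrac32,2$ (this is what $\Delta=ST(\mathfrak F\cup S\mathfrak F\cup S^2\mathfrak F)$ gives, since $ST(z)=2-\tfrac1z$ carries the triangle with vertices $1,2,\infty$ to the one with vertices $1,\tfrac32,2$) and that its $\PSL$-translates tessellate $\H$. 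Since a geodesic with irrational endpoints is a connected unbounded arc meeting no vertex of this tessellation, it must cross the interior of some cell $g\Delta$; replacing $\gamma$ by $g^{-1}\gamma$, which does not change $\bar\gamma$, I may assume $\gamma$ crosses $\Delta$ itself. The rational/cusp cases, where an endpoint is a vertex, are all $\PSL$-equivalent to geodesics into $\pi(\infty)$ by Lemma \ref{orbit_0} and can be handled as boundary cases of the argument below.

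The three sides of $\Delta$ are $e_1=(1,\tfrac32)$, $e_2=(\tfrac32,2)$, and $e_3=(1,2)$, and they cut $\partial\H$ into three arcs $J_1=(1,\tfrac32)$, $J_2=(\tfrac32,2)$, and $J_3=(2,\infty)\cup\{\infty\}\cup(-\infty,1)$, where $J_k$ is the arc separated from $\Delta$ by $e_k$. A geodesic crossing $\Delta$ enters through one side and exits through another, so its two endpoints lie in two distinct arcs, the endpoint beyond $e_k$ lying in $J_k$. The key tool is the order-three element $h\in\PSL$ stabilizing $\Delta$ and cycling its vertices $1\to\tfrac32\to2\to1$ (explicitly $h(z)=\tfrac{4z-7}{3z-5}$); it permutes the sides and hence the arcs cyclically, $J_1\to J_2\to J_3\to J_1$. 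Because $\gneg$ lies in exactly one of the three arcs, a unique power $h^m$ sends that arc to $J_3$; since $h^m\in\PSL$, the geodesic $h^m\gamma$ still represents $\bar\gamma$, still crosses $\Delta$, and now has $\gneg\in J_3$, which forces $\gplus\in J_1\cup J_2=(1,2)$.

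It then remains to split on the position of $\gneg$ inside $J_3$. If $\gneg\in(-\infty,1)$ then $(\gplus,\gneg)\in(1,2)\times(-\infty,1)$, the first piece of $\SSS$, and we are done. If instead $\gneg\in(2,\infty)\cup\{\infty\}$, I would apply $T^{-3}(z)=z-3\in\PSL$, which carries $\gplus\in(1,2)$ into $(-2,-1)$ and $\gneg>2$ into $\gneg-3>-1$, landing $(\gplus,\gneg)$ in $(-2,-1)\times(-1,\infty)$, the second piece of $\SSS$. I expect the crux to be exactly this last dichotomy: $\PSL$ contains no orientation-reversing symmetry folding $(2,\infty)$ onto $(-\infty,1)$ while fixing $(1,2)$, so one genuinely needs both pieces of $\SSS$, the reflection relating them being realized only on $\MM$ and not by any element of $\PSL$. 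The remaining work is bookkeeping: confirming the cyclic action of $h$ on $J_1,J_2,J_3$, checking that $\gplus$ and $\gneg$ never occupy the same arc, and disposing of the measure-zero vertex and cusp endpoints (where $\gneg=\infty$ or an endpoint is rational) as limiting cases of the same two reductions.
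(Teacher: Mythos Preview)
Your argument is correct and genuinely different from the paper's. The paper works in the fundamental domain $\mathfrak{F}$ itself: it lists the four ways a geodesic arc in $\mathfrak{F}$ can join two of the four boundary segments, reads off the resulting inequalities on $(\gplus,\gneg)$ in each case by elementary geometry, and then writes down an explicit element of $\PSL$ (one of $T^{-3}ST$, $T^{-1}$, or $T^{-\lfloor\gplus\rfloor+1}$) that carries the pair into $\SSS$. Your route instead exploits the Farey cell $\Delta$ and its order-three stabilizer $h$: you first normalize so that $\gamma$ crosses $\Delta$, then use the cyclic action of $h$ on the three boundary arcs $J_1,J_2,J_3$ to force $\gneg\in J_3$, leaving only the dichotomy $\gneg<1$ versus $\gneg>2$, the latter fixed by $T^{-3}$.

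What each buys: the paper's case analysis is entirely explicit and requires no auxiliary element such as $h$, but produces four cases with ad hoc fixes. Your approach is more structural---the symmetry of $\Delta$ collapses three of the four cases into one---and makes transparent why $\SSS$ must have two components (your remark that no element of $\PSL$ folds $(2,\infty)$ onto $(-\infty,1)$ while fixing $(1,2)$). Both proofs leave the cusp/rational endpoints as boundary cases; the paper's does so silently, and you flag them explicitly. Your identification of $\Delta$ as the ideal triangle with vertices $1,\tfrac32,2$ via $ST(z)=2-\tfrac1z$ acting on the $(1,2,\infty)$ triangle is correct and worth keeping in any write-up.
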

\begin{proof}
The action $T$ glues the line $[1+i,\infty]$ to $[2+i,\infty]$, and $S$ glues $[1+i, \tfrac{3+i\sqrt{3}}{2}]$ to $[\tfrac{3+i\sqrt{3}}{2},2+i]$. Thus, without loss of generality, we can take $\bar{\gamma}$ to be a negatively oriented geodesic arc in $\mathfrak{F} $ in one of the following cases:
\begin{enumerate}
 \item connecting $[1+i,\infty]$ to $[1+i, \tfrac{3+i\sqrt{3}}{2}]$
  \item connecting $[1+i,\infty]$ to $[\tfrac{3+i\sqrt{3}}{2},2+i]$
   \item connecting $[1+i, \tfrac{3+i\sqrt{3}}{2}]$ to $[2+i,\infty]$ 
  \item connecting $[\tfrac{3+i\sqrt{3}}{2},2+i]$ to $[2+i,\infty]$.
\end{enumerate}
In the first case, we consider all geodesics that cross both the geodesic $[1+i,\infty]$ and the geodesic $[1+i, \tfrac{3+i\sqrt{3}}{2}]$. The extremes of this set are the geodesics $[1,\infty]$ and $[0,2]$. Thus, we see that $\gneg<0<1<\gplus<2$. Another way to see this is that any geodesic crossing $[1+i,\infty]$ must have an endpoint greater than $1$. In order to cross $[1+i, \tfrac{3+i\sqrt{3}}{2}]$, the geodesic cannot cross $[2+i, \infty]$. Thus, the endpoint must be less than $2$. Since hyperbolic geodesics are Euclidean semicircles centered on the real axis, and the geodesic must cross above $1+i$, the radius is at least 1. Thus, the other endpoint must be less than $0$.

The transformation $a(z)=T^{-3}ST(z)=-1-\frac{1}{z}$ gives $-2<a(\gplus)<-\frac{3}{2}<1<a(\gneg)$. Thus, $a(\gamma)$ gives a lift of $\bar{\gamma}$ in $\AAA $. 

In the second case, $\gneg<1<\frac{3}{2}<\gplus<\frac{5}{2}$. When $\gplus<2,$ $\gamma$ is in $\AAA $, otherwise $T^{-1}(\gneg)<0<1<T^{-1}(\gplus)<\frac{3}{2}$ defines a lift of $\bar{\gamma}$ in $\AAA$. For the third, $0<\gneg<2<3<\gplus$, which is moved to $\AAA $ by $T^{-\lfloor\gplus\rfloor+1}(z)=z-\lfloor\gplus\rfloor+1$. Finally, in the fourth case, $1<\gneg<2<3<\gplus$ again is moved to $\AAA $ by $T^{-\lfloor\gplus\rfloor+1}(z)=z-\lfloor\gplus\rfloor+1$.
\end{proof}

\subsection{Cutting sequences and continued fraction expansions}\label{lehcut}

The coding of geodesics $\bar{\gamma}$ on $\MM$ is the same as the coding for the regular continued fractions. For the Lehner and Farey continued fractions, we choose the lift $\gamma\in\AAA $ instead of $\pm\big((1,\infty)\times(-1,0)\big)$.  An oriented geodesic $\gamma$ in $\H$ is cut into segments as it crosses triangles in the Farey tessellation $\F$. Each segment of the geodesic crosses two sides of a triangle in the tessellation. If the vertex where the two sides meet is on the left, we label the segment $L$, if it is on the right we label it $R$. This can be thought of as $\gamma$ turning left or right as it crosses the triangle.

\begin{proposition}\label{rcf_lifts}\cite[Section 1.2]{Ser}
 Every geodesic $\bar\gamma$ on $\MM$ other than the line from $\pi(\infty)$ to $\pi(i)$ to $\pi(\infty)$ lifts to a geodesic $\gamma$ in $\H$. These geodesics have cutting sequence $\dots L^{n_{-1}}R^{n_0}L^{n_1}\dots$ described above. Since different lifts of $\bar\gamma$ differ by covering translations which leave the Farey tessellation invariant and preserve orientation, the labels of a segment and hence the cutting sequences are independent of the lift chosen.
\end{proposition}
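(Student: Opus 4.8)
The plan is to follow Series' argument in \cite[Section 1.2]{Ser}, adapted to the $T$-translated tessellation $\F$ and the lift class $\AAA$. Three points must be checked: that a lift exists, that its cutting sequence has the alternating block form $\dots L^{n_{-1}}R^{n_0}L^{n_1}\dots$, and that the sequence is independent of the lift. For existence I would invoke orbifold covering theory: $\pi\colon\H\to\MM$ is the universal orbifold cover, so every geodesic $\bar\gamma$ lifts to a geodesic $\gamma$ in $\H$, and by Lemma \ref{geo_lifts_leh} the lift may be normalized to lie in $\AAA$. The sole exception is $\pi(i\R)$, the image of the imaginary axis: since $i\R$ is itself an edge of $\F$, this geodesic runs \emph{along} edges rather than crossing triangles transversally, so it carries no $L/R$ labeling; equivalently it is the geodesic from the cusp $\pi(\infty)$ through the order-two cone point $\pi(i)=\pi(1+i)$ and back, exactly the excluded case.

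Next I would establish the block structure. A geodesic $\gamma\in\AAA$ has irrational endpoints $\gplus,\gneg$, so by Lemma \ref{orbit_0} neither endpoint is an ideal vertex of $\F$ (the vertices are precisely $\Q\cup\{\infty\}$). Consequently $\gamma$ meets infinitely many triangles in each direction, producing a bi-infinite string of labels. At a given crossing, $\gamma$ separates one vertex of the triangle from the other two; the crossing is $L$ or $R$ according to whether that distinguished vertex lies to the left or right of $\gamma$. A maximal run $L^{n}$ corresponds to $\gamma$ sweeping through the fan of triangles sharing one fixed ideal vertex $v$ on its left, and the label changes to $R$ exactly when $\gamma$ crosses the edge past which the distinguished vertex jumps to the other side. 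Each run is finite, for an infinite run would force the relevant endpoint of $\gamma$ to equal the common vertex $v\in\Q\cup\{\infty\}$, contradicting irrationality. Hence the labels form a bi-infinite alternating concatenation of finite $L$- and $R$-blocks; normalizing the index so that $0$ lies in an $R$-block yields $\dots L^{n_{-1}}R^{n_0}L^{n_1}\dots$ with every $n_i\ge 1$, which is the cutting sequence defined above, and the identification of the exponents $n_i$ with the regular continued-fraction digits of $\gplus,\gneg$ is Series' computation.

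Independence of the lift is then immediate: two lifts of $\bar\gamma$ differ by a covering transformation $g\in\PSL$, and because $\F=\PSL(i\R)$ is $\PSL$-invariant and every element of $\PSL$ is an orientation-preserving isometry, $g$ sends each segment of one lift to a segment of the other with the same label. Thus the bi-infinite cutting sequence is identical for all lifts. I expect the main obstacle to be the middle step: matching the turning directions precisely to the fan structure of $\F$ around each ideal vertex, and using this to prove simultaneously that the blocks are finite and that the sequence never terminates---equivalently, that the only geodesics failing to produce a genuine bi-infinite alternating sequence are those with an ideal endpoint, whose representative on $\MM$ is the excluded cusp geodesic. The remaining steps are either standard covering theory or a direct appeal to \cite[Section 1.2]{Ser}.
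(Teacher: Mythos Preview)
The paper does not supply its own proof of this proposition: it is stated with the citation \cite[Section~1.2]{Ser} and no argument follows. Your proposal is essentially a reconstruction of Series' original reasoning, which is exactly what the citation points to, so in that sense you and the paper agree on the approach.

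Your argument is sound in outline, but one sentence needs adjusting. You write ``A geodesic $\gamma\in\AAA$ has irrational endpoints $\gplus,\gneg$'' as if this were a consequence of membership in $\AAA$; it is not, since $\AAA$ is defined only by the real intervals $(1,2)\times(-\infty,1)$ and $(-2,-1)\times(-1,\infty)$, which contain rationals. What you actually need (and implicitly use) is the additional hypothesis that $\bar\gamma$ does not run into the cusp, equivalently that neither endpoint of $\gamma$ lies in $\Q\cup\{\infty\}$. With that hypothesis stated explicitly, your finiteness-of-blocks argument via Lemma~\ref{orbit_0} goes through. Also, normalizing the lift into $\AAA$ via Lemma~\ref{geo_lifts_leh} is harmless but unnecessary here: the proposition concerns arbitrary lifts, and the $\PSL$-invariance argument in your third step already handles all of them at once.
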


Since any M\"obius transformations preserve the hyperbolic distance, a map $M:\SSS\to\SSS$ induces a map on $\AAA$. Hyperbolic geodesics are uniquely determined by their endpoints, so we can describe maps from $\AAA$ to itself by the action on the endpoints. Thus, we will use both $M(\gplus,\gneg)$ and $M(\gamma)$. Consider the set $X$ of unit tangent vectors based on the projection of the geodesics $\pm[1,2]$ to $\pi(\pm[1,2])$ that point along geodesics in $\pi(\AAA)$. We will use $X$ as a cross-section of the geodesic flow on the unit tangent bundle $T_1 \MM$. 
However, we can identify the tangent vectors with their base points $\x$, since both the vector and $\x$ are uniquely determined by $\gamma$.  

In the case of Lehner continued fractions, the first digit of the continued fraction expansion of $\gplus$ is determined by whether $\gamma$ turns left or right when crossing the triangle with vertices $\{1,\frac{3}{2},2\}$ (for $1\leq\gplus<2$) or  $\{-1,-\frac{3}{2},-2\}$ (for $-2<\gplus\leq-1$). 
To every geodesic $\gamma \in \AAA $ we associate the positively
oriented geodesic arc $[\x,\eta_\gamma]$, where
\begin{equation*}
\xi_\gamma:=\begin{cases}
\gamma \cap [1,2] & \mbox{\rm if $1\leq\gplus<2$} \\
\gamma \cap [-2,-1] & \mbox{\rm if $-2<\gplus\leq-1$}
\end{cases}
\quad \mbox{\rm and} \quad
\eta_\gamma:=\begin{cases}
\gamma \cap [a_0+\e_0,\tfrac{3}{2}] & \mbox{\rm if $1\leq\gplus<2$} \\
\gamma \cap [-a_0-\e_0,-\tfrac{3}{2}] & \mbox{\rm if $-2<\gplus\leq-1$},
\end{cases}
\end{equation*}
with $(a_0,\e_0)=(a_0(\gplus),\e_0(\gplus))$.
That is, $\x$ is where the geodesic enters the cell and $\eta_\gamma$ is where $\gamma$ exits the cell. This construction gives an alternate definition of $X $ as the collection of unit tangent vectors based at $\pi(\x)$ pointing along $\pi(\gamma)$, and $\pi(\y)$ as the base point of the first return of the geodesic flow to the cross section $X $. The cross section of the geodesic flow for the regular continued fractions is the set of unit tangent vectors based at $\pi(i\R)$ pointing along $\pi(\gamma)$ whose cutting sequence changes from $L$ to $R$ or from $R$ to $L$ at $\pi(i\R$). Thus, the regular continued fraction case considers geodesics with endpoints in  $\pm\big((1,\infty)\times(-1,0)\big)$ and unit tangent vectors based on $i\R$ which point along $\gamma$. Then the first return to the cross section of the geodesic flow lifts to the next place where the cutting sequence changes type and the last place where $\gamma$ crosses a vertical side of the Farey tessellation. Thus, the Lehner and Farey expansions come from a slow down of the regular continued fractions.

We consider the geodesic $\gamma\in\AAA$ with $\gplus=\e[\![(a_0,\e_0)(a_1,\e_1),\dots]\!]$, $\gneg=-\e\llangle
(\e_{-1}/a_{-1})(\e_{-2}/a_{-2}) \dots \rrangle$, where $\e=\operatorname{sign}\gplus$.
For a point $z\in\gamma,$ we define the map $\rho(z):=\frac{1}{\e a_0 -z}$ and denote the induced map on $\SSS$ as $\bar\rho$. Thus, 
\begin{equation}\label{rholeh}
\begin{split}\bar{\rho}\left(\gplus,\gneg\right)&=\left(\frac{1}{\e a_0-\gplus},\frac{1}{\e a_0-\gneg} \right)
=\left( -\e \e_0[\![ (a_1,\e_1)(a_2,\e_2)\dots ]\!], \e \e_0
\llangle (\e_0/a_0)(\e_{-1}/a_{-1})\dots \rrangle\right).
\end{split}
\end{equation}
Note that $\bar{\rho}$ takes the geodesic arc $\e[1,2]$ to the arc $-\e\e_0[1,\infty]$, and it takes the geodesic $\e[a_0+\e_0,\tfrac{3}{2}]$ to $-\e\e_0[1,2]$.

Since the endpoints of the geodesic also uniquely determine $\x$ and the unit tangent vector pointing along $\gamma$, $\bar\rho$ also induces a map on $X $.

\begin{figure}
 \includegraphics[width=.8\textwidth]{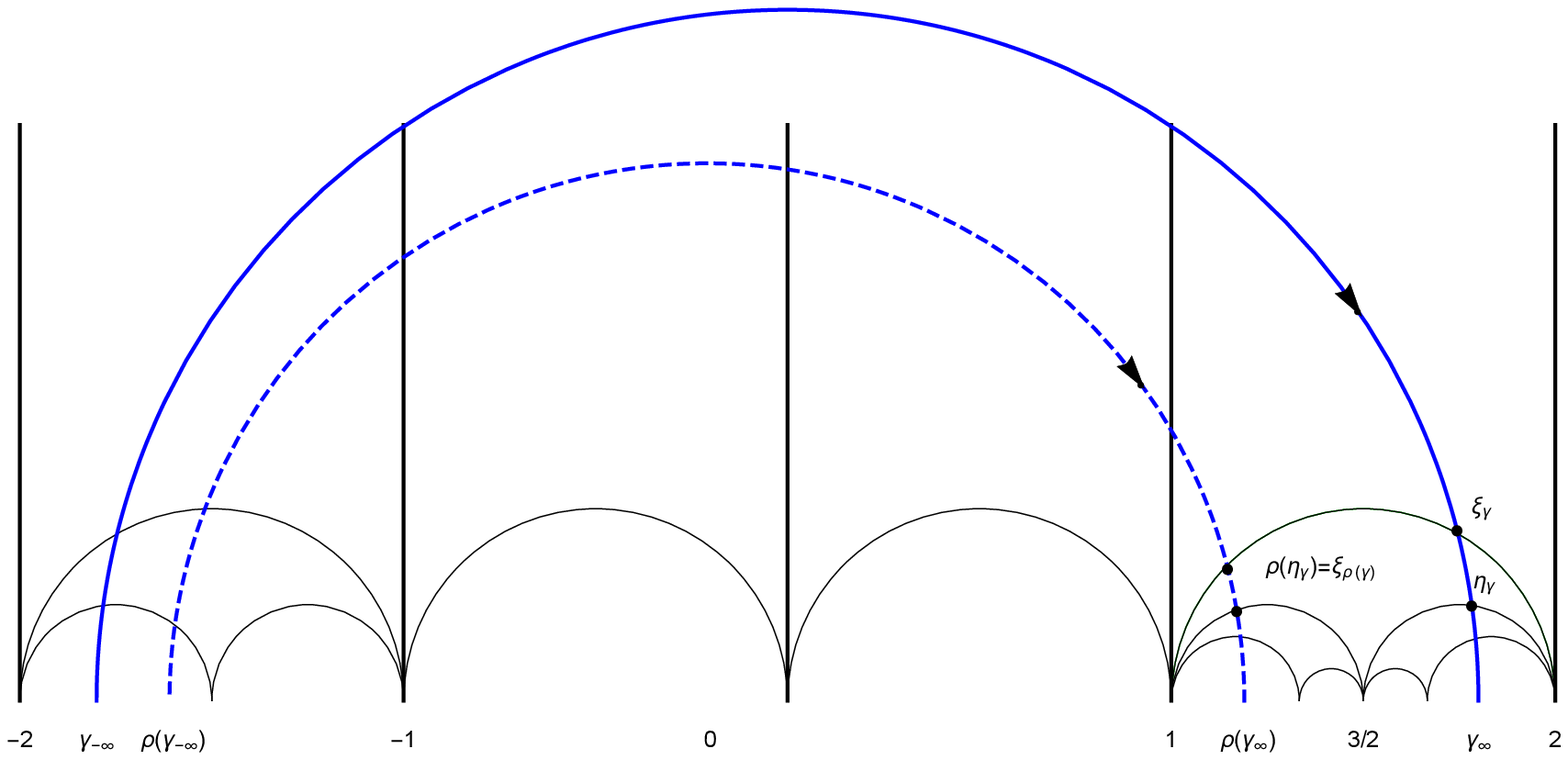}
\caption{$\gamma$ (solid) has cutting sequence $\dots L R L^2 R \x L \eta_\gamma L\dots$, $\bar{\rho}(\gamma)$ (dashed) has cutting sequence $\dots L R L^2 R \xi_{\bar{\rho}(\gamma)} R \eta_{\bar{\rho}(\gamma)}  R^2\dots$}\label{lehner1}
\end{figure}

\begin{figure}
 \includegraphics[width=.8\textwidth]{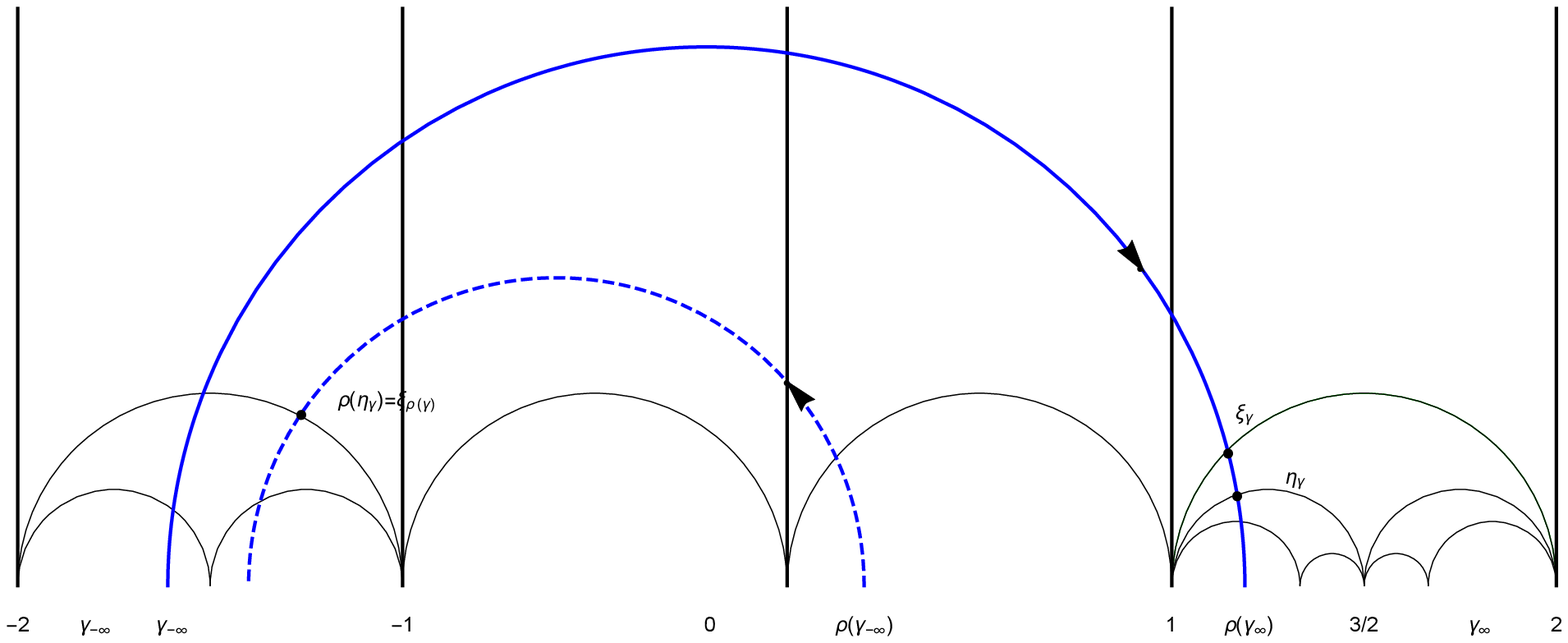}
 \caption{$\gamma$ (solid) has cutting sequence$\dots L R L^2 R \x R \eta_\gamma R^2\dots$, $\bar{\rho}(\gamma)$ (dashed) has cutting sequence $\dots LR L  \xi_{\bar{\rho}(\gamma)} L \eta_{\bar{\rho}(\gamma)L }\dots $}\label{lehner2}

\end{figure}

\begin{theorem}\label{commuteleh}
The map $\bar{\rho }:\SSS  \to \SSS $ is invertible, and the diagram
\begin{equation*}
\begin{tikzcd}[ column sep = large]
 \mathcal{S} \arrow{r}{\overline\rho}\arrow[swap]{d}{J} & \mathcal{S} \arrow{d}{J} \\
 \widetilde{\Omega} \arrow{r}{\tilde{\mathcal{L}}} & \widetilde{\Omega}
\end{tikzcd}
\end{equation*}
commutes, where $J:\SSS  \rightarrow \tOmega$ is the invertible map defined by
\begin{equation*}
J(x,y):=\operatorname{sign}(x) (x,-y,1) = \begin{cases}
(x,-y,1) & \textnormal{ if } x\in\left[1,2\right),\ y<1 \\
(-x,y,-1) & \textnormal{ if } x\in\left(-2,-1\right],\ y>-1 .
\end{cases}
\end{equation*}
The final coordinate of $J$ and $\tilde{\mathcal{L}}$ keeps track of whether $\overline{\rho}$ is orientation reversing.
\end{theorem}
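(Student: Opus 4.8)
The plan is to verify the commutativity $J \circ \overline\rho = \tilde{\mathcal L}\circ J$ by tracking both the continued fraction expansions and the orientation sign, and to establish invertibility of $\overline\rho$ as a separate bookkeeping step. First I would record the explicit formula \eqref{rholeh} for $\overline\rho$, which already expresses $\overline\rho(\gplus,\gneg)$ in terms of shifted expansions with the scalar $-\e\e_0$ multiplying $\gplus'$ and $\e\e_0$ multiplying $\gneg'$. The sign $\e=\operatorname{sign}(\gplus)$ is exactly the data recorded by the third coordinate of $J$, so I expect the proof to split into the two cases $\e=+1$ (i.e.\ $\gplus\in[1,2)$) and $\e=-1$ (i.e.\ $\gplus\in(-2,-1]$), and within each, the two subcases $(a_0,\e_0)=(2,-1)$ and $(a_0,\e_0)=(1,+1)$.

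The core computation is to chase $(x,y)=(\gplus,\gneg)$ around both paths and check equality coordinate by coordinate. Going right-then-down, I apply $\overline\rho$ using \eqref{rholeh} and then $J$, which multiplies by $\operatorname{sign}$ of the new first coordinate (namely $\operatorname{sign}(-\e\e_0)$) and records the new sign. Going down-then-right, I first apply $J$ to get $\operatorname{sign}(x)(x,-y,1)$, landing in $\tilde\Omega=\Omega\times\{-1,1\}$, and then apply $\tilde{\mathcal L}(x,y,\e)=(\mathcal L(x,y),-\e_0(x)\e)$ using the piecewise definition of $\mathcal L$. The key identities to match are that $\mathcal L$ sends $x=|\gplus|\in[1,2)$ to $\e_0/(|\gplus|-a_0)$, which must agree up to the sign bookkeeping with the first coordinate $-\e\e_0[\![(a_1,\e_1)\dots]\!]$ of $\overline\rho$; and similarly that the $y$-action $\e_0/(|\gneg'|+a_0)$ of $\mathcal L$ reproduces the shifted Farey expansion in the second coordinate. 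The orientation coordinate is where the factor $-\e_0(x)$ in $\tilde{\mathcal L}$ and the sign flip from $-\e\e_0$ inside $\overline\rho$ have to be reconciled; I would check that in every case the product of signs on both sides equals $-\e_0\e$, so that the two triples agree in their last slot.

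For invertibility of $\overline\rho$, rather than inverting the piecewise formula directly I would argue that $\overline\rho$ is conjugate via $J$ to $\tilde{\mathcal L}$, and $\tilde{\mathcal L}$ is invertible because $\mathcal L$ is the natural extension of $L$ (hence invertible by construction as the shift on two-sided sequences, per Section \ref{leh}) and the sign flip $\e\mapsto-\e_0(x)\e$ is visibly invertible once $x$ is known. Since $J$ is stated to be an invertible map, invertibility of $\overline\rho$ follows formally from the commuting square; alternatively one writes the inverse shift on the bi-infinite expansion $\dots(\e_{-1}/a_{-1})\,\|\,(a_0,\e_0)(a_1,\e_1)\dots$ directly. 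I expect the main obstacle to be the sign and orientation bookkeeping rather than any analytic content: keeping straight the four scalars $\e$, $\e_0$, $\operatorname{sign}(x)$, and the flip in the third coordinate across the two geometric cases (endpoints near $+[1,2]$ versus $-[1,2]$) is error-prone, and the cleanest route is probably to unify the two cases by carrying $\e=\operatorname{sign}(\gplus)$ symbolically throughout and only substituting $\pm1$ at the end to confirm each entry of the resulting triples coincides. Figures \ref{lehner1} and \ref{lehner2} provide the geometric check that the cutting-sequence shift matches $\overline\rho$ in the two orientation regimes, which I would cite to corroborate the algebra.
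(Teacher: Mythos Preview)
Your proposal is correct and follows essentially the same approach as the paper: both sides of the square are computed directly, carrying $\e=\operatorname{sign}(\gplus)$ symbolically rather than splitting into cases, and the three coordinates are matched. The paper's proof is in fact even terser than you anticipate---it is just the two-line computation
\[
J\circ\bar\rho(x,y)=\left(\tfrac{-\e\e_0}{\e a_0-x},\tfrac{\e\e_0}{\e a_0-y},-\e\e_0\right),\qquad
\tilde{\mathcal L}\circ J(x,y)=\left(\tfrac{\e_0}{\e x-a_0},\tfrac{\e_0}{-\e y+a_0},-\e\e_0\right),
\]
with no separate discussion of invertibility; your plan to deduce invertibility of $\bar\rho$ from the commuting square and the invertibility of $J$ and $\tilde{\mathcal L}$ is a clean addition that the paper leaves implicit.
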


\begin{proof} Let $x= \e [\![ (a_0,\e_0) (a_1,\e_1)\ldots ]\!]$, $y=-\e \llangle (\e_{-1}/a_{-1})(\e_{-2}/a_{-2})\ldots \rrangle$ with
$\e \in\{ \pm 1\}$, so that $(x,y)\in \SSS $. Then we have 
\begin{align*}J\circ \bar{\rho}(x,y)&=J\left(\frac{1}{\e a_0-x},\frac{1}{\e a_0-y}\right)=\left(\frac{-\e \e_0}{\e a_0-x},\frac{\e \e_0}{\e a_0-y},-\e \e_0\right),\\
 \tilde{\mathcal{L}}\circ J(x,y)&=\tilde{\mathcal{L}}(\e x,-\e y, \e)=\left(\frac{\e_0}{\e x-a_0},\frac{\e_0}{-\e y+a_0},-\e \e_0\right).\qedhere\end{align*}
 \end{proof}
 Thus, we have an alternate proof of:
\begin{cor}\cite[Theorem 1]{DK2}\label{Lmeasure}
The measure $\tfrac{dxdy}{(x+y)^2}$ is $\mathcal{L}$ invariant.
\end{cor}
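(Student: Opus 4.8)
The plan is to exploit the conjugacy furnished by Theorem~\ref{commuteleh} together with the classical fact that the measure on the space of oriented geodesics arising from the $\PSL$-action is invariant. Parametrize an oriented geodesic by its endpoint pair $(x,y)=(\gplus,\gneg)$ and set
\[
m := \frac{dx\,dy}{(x-y)^2}
\]
on $\SSS$. First I would record the standard computation that $m$ is invariant under the diagonal action of any $g=\left(\begin{smallmatrix} a & b \\ c& d\end{smallmatrix}\right)\in\PSL$ on endpoint pairs: from the identities $g(x)-g(y)=\frac{x-y}{(cx+d)(cy+d)}$ and $g'(z)=(cz+d)^{-2}$ one reads off $\frac{d(gx)\,d(gy)}{(gx-gy)^2}=\frac{dx\,dy}{(x-y)^2}$, so $m$ is $\PSL$-invariant.

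Next I would observe that $\bar{\rho}$ preserves $m$. By \eqref{rholeh}, on each region of $\SSS$ where the pair $(\e,a_0)=(\operatorname{sign}(\gplus),a_0(\gplus))$ is constant, $\bar{\rho}$ is exactly the diagonal action on endpoints of the single M\"obius transformation $z\mapsto \frac{1}{\e a_0-z}\in\PSL$; since $\bar{\rho}$ is an invertible bijection of $\SSS$, these pieces and their images both partition $\SSS$, so $\bar{\rho}$ is globally $m$-preserving. The third step is to transport $m$ through $J$. Writing $\mu:=\frac{du\,dv}{(u+v)^2}$ on $\Omega$ and $\tilde\mu:=\mu\otimes(\text{counting measure on }\{-1,1\})$ on $\tilde{\Omega}=\Omega\times\{-1,1\}$, I would check that $J_*m=\tilde\mu$. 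This is the only place the explicit form of $J$ enters: in both branches $J$ is, up to the orientation bit, the change of variables $(u,v)=(x,-y)$ (when $x\in[1,2)$) or $(u,v)=(-x,y)$ (when $x\in(-2,-1]$), each with Jacobian of absolute value $1$ and each sending $(x-y)^2$ to $(u+v)^2$; thus $m$ is carried to $\mu$, and since the two branches occupy the distinct sheets $\e=+1$ and $\e=-1$, the pushforward is exactly the product measure $\tilde\mu$.

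Finally I would combine these. Theorem~\ref{commuteleh} gives $\tilde{\mathcal{L}}=J\circ\bar{\rho}\circ J^{-1}$; since $\bar{\rho}$ preserves $m$ and $J_*m=\tilde\mu$, the conjugate $\tilde{\mathcal{L}}$ preserves $\tilde\mu$. Because the $(x,y)$-component of $\tilde{\mathcal{L}}$ is $\mathcal{L}(x,y)$, independent of $\e$, the projection $\pi:\tilde{\Omega}\to\Omega$ satisfies $\pi\circ\tilde{\mathcal{L}}=\mathcal{L}\circ\pi$; pushing forward and using $\pi_*\tilde\mu=2\mu$, we conclude that $\mathcal{L}$ preserves $2\mu$, hence that $\mu=\frac{dx\,dy}{(x+y)^2}$ is $\mathcal{L}$-invariant.

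The conceptual heart is the observation in the third step that the Dajani and Kraaikamp density $\frac{1}{(x+y)^2}$ is nothing but the geodesic-flow-invariant density $\frac{1}{(x-y)^2}$ after the reflection $y\mapsto -y$ built into $J$; once this is seen, the remaining steps are formal. The point demanding the most care — and the likeliest source of error — is the bookkeeping of the orientation coordinate $\e$: one must confirm that the two branches of $J$ land on distinct sheets (so that $J_*m$ really is the product measure $\tilde\mu$ rather than a $2$-to-$1$ fold) and that marginalizing $\e$ out intertwines $\tilde{\mathcal{L}}$ with $\mathcal{L}$, which accounts for the harmless factor of $2$. Everything else reduces to the two Jacobian checks above.
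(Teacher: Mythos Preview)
Your proof is correct and follows essentially the same route as the paper: both push the geodesic-flow invariant measure $\tfrac{dx\,dy}{(x-y)^2}$ through $J$ via the conjugacy of Theorem~\ref{commuteleh}, then project out the $\e$ coordinate. Your version is more explicit about the $\PSL$-invariance computation and the sheet bookkeeping (including the harmless factor of $2$), whereas the paper invokes Hopf's formula for the Liouville measure and does the change of variables in one line.
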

\begin{proof}

We use the fact that the invariant measure for the first return map for a cross section of the geodesic flow on  $T_{1}\H$ is $\tfrac{d\alpha d\beta d\theta}{(\alpha-\beta)^2}$ where $\alpha,\beta\in\R$ denote the endpoints of the geodesic $\gamma(u)$ through $u\in T_1\H$ and $\theta$ is the distance between the base point of $u$ and the midpoint of $\gamma(u)$ \cite{Hopf}. We use $J$ to push forward this measure and project to the $(x,y)$ coordinates to get the invariant measure for $\mathcal{L}$. That is, since $J\circ \bar{\rho}\circ J^{-1}=\tilde{\mathcal{L}}$, the change of variables formula gives that the measure with density 
\[\left|\frac{d(\e x) d(-\e y) }{\big(\e x -(-\e y)\big)^2}\right|=\frac{dxdy}{(x+y)^2}\] 
is $\tilde{\mathcal{L}}$-invariant. Projecting $\tilde{\mathcal{L}}$ acting on $\tilde{\Omega}$ to $\mathcal{L}$ acting on $\Omega$ by $(x,y,\e)\mapsto(x,y)$ yields $\tfrac{dxdy}{(x+y)^2}$ as a $\mathcal{L}$-invariant measure.
\end{proof}

\section{Connection with cutting sequence and regular continued fractions}\label{cut}

Series \cite{Ser} described an explicit relationship between the cutting sequence of a geodesic and the regular continued fractions. The algorithms described in Sections \ref{leh} and \ref{far_insert} allow us to translate from the cutting sequence of the regular continued fraction expansion and the Lehner and Farey continued fraction expansions. By examining the continued fraction expansions of the endpoints, we prove Theorem \ref{thm1}:

\begin{theorem*} Our classification of geodesics $\bar\gamma$ on $T_1\MM$ with cutting sequence $\dots L^{n_{-1}}R^{n_0}L^{n_1}\dots$ depends on whether or not $n_0=1$. In the one line notation given in \eqref{lehnerdef} and \eqref{fareydef}, if $n_0=1$, $\bar\gamma$ has a lift on $\H$ with forward endpoint 
 \[\gplus =[\![(2,-1)^{n_1-1}(1,+1)(2,-1)^{n_2-1}(1,+1)\dots]\!]
 \]
 and backwards endpoint 
 \[\gneg =\llangle(+1/1)(-1/2)^{n_{-1}}(+1/1)(-1/2
)^{n_{-2}-1}\dots\rrangle
 \] when $n_{-1}\geq 2$ and
  \[\gneg  =\llangle(+1/1)(+1/1)(-1/2)^{n_{-1}-1}(+1/2)(-1/2)^{n_{-2}-1}\dots\rrangle
 \] when $n_{-1}=1$. 
 When $n_0>1$, $\bar\gamma$ has a lift on $\H$ with endpoints
 \[\gplus =[\![(2,-1)^{n_1-1}(1,+1)(2,-1)^{n_2-1}(1,+1)\dots]\!],\]\[
\gneg =\llangle(+1/1)(-1/2)^{n_{-1}}(+1/1)(-1/2)^{n_{-2}-1}\dots\rrangle.
 \]
\end{theorem*}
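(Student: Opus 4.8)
The plan is to combine three ingredients already in place: Series' dictionary between the cutting sequence and the regular continued fraction of the endpoints (Proposition \ref{rcf_lifts} together with the formulas for $\gplus,\gneg$ in the introduction), the insertion identities that convert a regular expansion into its Lehner and Farey forms (equation \eqref{leh_insert} and the algorithm of Section \ref{far_insert}), and the lift-selection of Lemma \ref{geo_lifts_leh} that pins the representative geodesic inside $\AAA$. The commuting diagram of Theorem \ref{commuteleh}, which identifies the cross-section shift $\bar{\rho}$ on $\SSS$ with the natural extension $\tilde{\mathcal{L}}$, is what lets me read the two endpoints off a single two-sided sequence and serves as the consistency check at the end.

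For the forward endpoint I would first use Lemma \ref{geo_lifts_leh} to replace $\bar\gamma$ by a lift with $\gplus\in(1,2)$; a covering translation by a power of $T$ turns $[n_0;n_1,n_2,\dots]$ into $[1;n_1,n_2,\dots]$ while leaving the future blocks $L^{n_1}R^{n_2}\cdots$ untouched. Applying \eqref{leh_insert} then gives $\gplus=[\![(2,-1)^{n_1-1}(1,+1)(2,-1)^{n_2-1}(1,+1)\dots]\!]$ directly. This is exactly why the stated forward expansion is insensitive to $n_0$: the $R^{n_0}$ block is absorbed into the normalization of the cross-section, and only the future blocks $n_1,n_2,\dots$ survive.

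For the backward endpoint I would exploit that one step of $\bar{\rho}$ deletes the leading Lehner digit $(a_0,\e_0)$ and, by \eqref{rholeh}, prepends its image $(\e_0/a_0)$ to the Farey string; hence the Farey expansion attached to $\gneg$ is the reversal of the \emph{past} Lehner digits under $(2,-1)\mapsto(-1/2)$, $(1,+1)\mapsto(+1/1)$. Reading the past blocks $\dots L^{n_{-1}}R^{n_0}$ through \eqref{leh_insert} and reversing produces a string of the form $(+1/1)(-1/2)^{\ast}(+1/1)(-1/2)^{n_{-1}-1}\cdots$, and the same answer can be produced directly from cases (a) and (b) of the algorithm in Section \ref{far_insert}. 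The branch on whether $n_{-1}=1$ or $n_{-1}\ge 2$, and the extra leading $(+1/1)$ when $n_0=1$, should come precisely from the transition block at the $\dots L^{n_{-1}}R^{n_0}L^{n_1}\dots$ interface.

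The step I expect to be the main obstacle is the boundary bookkeeping at that interface. Pinning down exactly where the cross-section $X$ (the entry point $\x$ into the Farey cell $\Delta$) cuts the two-sided sequence is what determines the leading Farey digits, and an off-by-one in the length of the first $(-1/2)$-run is precisely the difference between the two displayed sub-cases. Concretely, I would track which of the four arc-types of Lemma \ref{geo_lifts_leh} the geodesic realizes as a function of $(n_0,n_{-1})$, apply the corresponding normalizing M\"obius map, and only then read the leading digits; the splits $n_0=1$ versus $n_0>1$ and $n_{-1}=1$ versus $n_{-1}\ge 2$ are exactly these arc-types. Finally I would confirm the whole correspondence is shift-equivariant by checking that $J\circ\bar{\rho}=\tilde{\mathcal{L}}\circ J$ carries the claimed pair of expansions for one sequence to the claimed pair for the shifted sequence, which both verifies the leading-digit conventions and closes the argument.
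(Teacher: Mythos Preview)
Your plan is sound but takes a genuinely different route from the paper. You propose to pass through the regular continued fraction of each endpoint: normalize the Series lift so that $\gplus\in(1,2)$, apply \eqref{leh_insert} to $\gplus$, and push the regular expansion of $\gneg$ through the algorithm of Section~\ref{far_insert} (or, equivalently, through the past of the natural extension via Theorem~\ref{commuteleh}). The paper instead proves the theorem by a direct combinatorial rule on the cutting sequence itself: reading one letter at a time, it assigns the digit $(2,-1)$ whenever the current letter equals its left neighbor and $(1,+1)$ whenever it differs; the forward endpoint is read left-to-right from $\xi_\gamma$ and the backward endpoint right-to-left. The case split in the theorem then drops out immediately from whether the first letters on either side of $\xi_\gamma$ match or differ.

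What your route buys is that it leans entirely on already-proved identities, so no new observation is needed beyond careful bookkeeping. What the paper's route buys is that it bypasses the insertion machinery and makes your ``main obstacle''---the leading Farey digits at the $\xi_\gamma$ interface---transparent: they are literally the first letter-comparisons on each side. One caution on your outline: the map $T^{-(n_0-1)}$ from the Series lift does land in $\AAA$, but $\AAA$ contains several lifts of the same $\bar\gamma$, and the one named in the theorem is pinned down by where $\xi_\gamma$ falls in the letter string, not by the Series normalization; so to recover the exact stated expansions you would still need to identify that lift, which is precisely what the paper's letter-by-letter reading does automatically.
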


For the Lehner and Farey continued fractions, we read the cutting sequence one letter at a time. If the letter is the same as the previous (letter to the left), the digit is $(2,-1)$, if it is different than the previous letter, the digit is $(1, +1)$.

\subsection{Lehner continued fractions}

We look at the cutting sequence for the Lehner continued fractions. When $\gplus\in\left[1,2\right)$, we have the sequence $\dots R\x R^{n_1-1} L^{n_2}\dots$ and the regular continued fraction expansion $[1;n_1,n_2,\dots]$.

\begin{description}
 \item[(A)] $1\leq\gplus<\tfrac{3}{2}$, then $n_1>1$. The first letter after $\x$ is $R$, which is the same as the previous letter, so the first digit is $(2,-1)$. In fact, each of the $n_1-1$ $R$'s correspond to the digit $(2,-1)$, so the Lehner continued fraction expansion starts $[\![(2,-1)^{n_1-1}\dots]$. Next, we get an $L$ corresponding to $(1,+1)$ followed by $L^{n_2-1}$ corresponding to $(2,-1)^{n_2-1}$. Continuing in this way, we get $\gplus=[\![(2,-1)^{n_1-1}(1,+1)(2,-1)^{n_2-1}(1,+1)\dots]\!]$, as in equation \eqref{leh_insert}.
 
 \item[(B)] $\tfrac{3}{2}<\gplus<2$, then $n_1=1$. The cutting sequence is now $\dots L^{n_{-2}}R^{n_{-1}}L^1R\x L^{n_2}\dots$, and the first letter after $\x$ is different from the previous. Thus, as in equation \eqref{leh_insert}, we get $\gplus=[\![(1,+1)(2,-1)^{n_2-1}(1,+1)(2,-1)^{n_3-1}(1,+1)\dots]\!]$.

\end{description}

For $-2<\gplus<-1$, the same procedure holds, with $L$'s and $R$'s reversed.

\subsection{Farey continued fractions}

We read the Farey continued fraction expansion of $\gneg$ from right to left starting at $\x$. To more easily see the connection to the cases in Section \ref{far_insert}, we consider $\e=-1$ and $\gneg>-1$.

\begin{figure}
 \includegraphics[width=.8\textwidth]{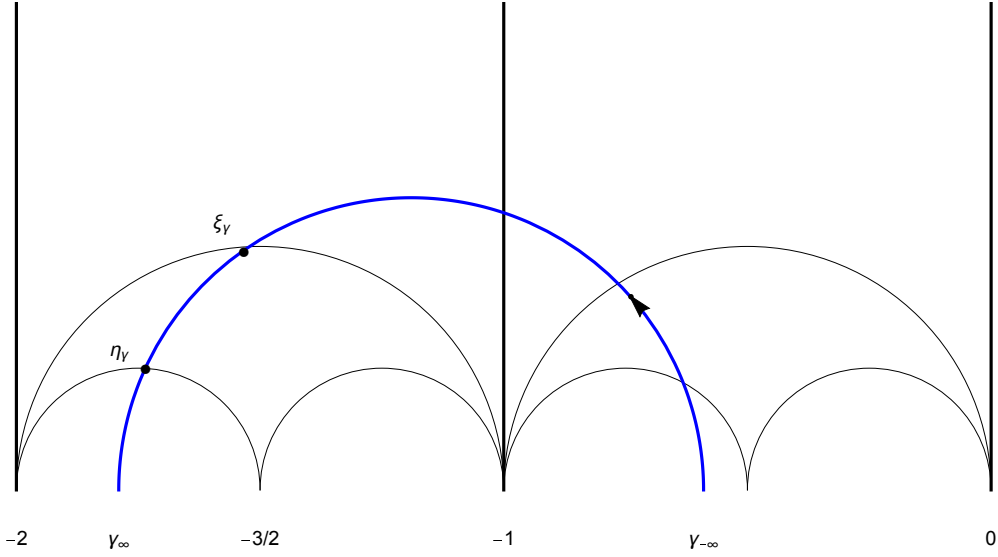}
 \caption{ Geodesic with cutting sequence $\dots L^2 R\x R\y R\dots$}\label{farey1}
 \end{figure}

\begin{description}
 \item[(a)] $1<\gneg$. We get the cutting sequence $\dots L^{n_{-1}}R^{n_{0}+1}L\x\dots$ (the $\e=+1$ case is shown in Figure \ref{lehner1}). Since we are reading from right to left, we start with the final $L$, which differs from the previous letter, so the first digit is $(+1/1)$. Next we have $R^{n_{0}+1}$ corresponding to $(-1/2)^{n_{0}}(+1/1)$. Following this procedure, we get the Lehner expansion $\llangle(+1/1)(-1/2)^{n_{0}}(+1/1)(-1/2)^{n_{-1}-1}\dots\rrangle$ as in Section \ref{far_insert}.
 
 \item[(b)] $0<\gneg<1$. We get the cutting sequence $\dots R^{n_{-2}}L^{n_{-1}}RL\x\dots$, as in the dashed line in Figure \ref{lehner2}. Again, we start with $L$ which differs from the previous letter. Now the preceding letter $R$ also differs from the previous, giving $(+1/1)(+1/1)$ followed by $L^{n_{-1}}$ and $(-1/2)^{n_{-1}-1}(+1/1)$. Continuing this process gives the Lehner expansion $\llangle(+1/1)(+1/1)(-1/2)^{n_{-1}-1}(+1/1)(-1/2)^{n_{-2}-1}\dots\rrangle$.
 
 \item[(c)]  $-1<\gneg<0$. We get the cutting sequence $\dots L^{n_{-1}}R^{n_{0}-1}LL\x\dots$, as in Figure \ref{farey1}. Note the $n_{0}$ is the first digit of the regular continued fraction expansion. It helps to split into the case where $n_{0}=1$ and $n_{0}>1$.
 
\begin{description}
\item[\textnormal{When $n_{0}>1$,}] the first $L$ agrees with the previous letter, so the Lehner expansion starts with $(-1/2)(+1/1)$. Again, $R^{n_{0}}$ corresponds to $(-1/2)^{n_{0}-1}(+1/1)$ and we get the Lehner expansion $\llangle(-1/2)(+1/1)(-1/2)^{n_{0}-2}(+1/1)(-1/2)^{n_{-1}-1}\dots\rrangle$.

 \item[\textnormal{When $n_{0}=1$,}] the cutting sequence is $\dots R^{n_{-2}}L^{n_{-1}+2}\x\dots$, so we start with $n_{-1}+1$ letters that agree with the previous, giving $  \llangle(-1/2)^{n_{-1}+1}(+1/1)(-1/2)^{n_{-2}-1}\dots\rrangle$.

\end{description}
 \end{description}
Again, the case where $\gneg<1$ corresponds to reversing $L$'s and $R$'s.

\section{Applications}\label{application}
In this section, we prove number theoretic results that follow from the use of $\PSL$ to describe the Lehner and Farey expansions.


\begin{lemma}\label{lehner_rho_period}
  If $\alpha=[\![\overline{(a_0,\e_0)\ldots(a_{r-1},\e_{r-1})}]\!]\in(1,2)$, $\beta=-\llangle\overline{(\e_{r-1}/a_{r-1})\ldots(\e_0/a_0)}\rrangle<1$, or \\$\alpha=-[\![\overline{(a_0,\e_0)\ldots(a_{r-1},\e_{r-1})}]\!]\in(-2,-1)$, $\beta=\llangle\overline{(\e_{r-1}/a_{r-1})\ldots(\e_0/a_0)}\rrangle>-1$, then

\begin{enumerate}
 \item \label{rho_r}$\bar{\rho}\ ^r(\alpha,\beta)=\e(-\e_0)(-\e_1)\cdots(-\e_{r-1})(\alpha,\beta)$.
  \item $\bar\rho\ ^{2r}(\alpha,\beta)=(\alpha,\beta)$.
\end{enumerate}
\end{lemma}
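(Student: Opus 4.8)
The plan is to read off how one application of $\bar{\rho}$ transforms the two-sided expansion of a geodesic and then exploit periodicity. Formula \eqref{rholeh} tells us that $\bar{\rho}$ deletes the leading Lehner digit $(a_0,\e_0)$ from $\gplus$, prepends its Farey counterpart $(\e_0/a_0)$ to $\gneg$, and rescales both endpoints by an overall sign dictated by $\e_0$. Writing $\e^{(k)}:=\operatorname{sign}$ of the first coordinate of $\bar{\rho}^{\,k}(\alpha,\beta)$, formula \eqref{rholeh} gives the recursion $\e^{(k+1)}=-\e^{(k)}\e_k$ with $\e^{(0)}=\e$, and I would prove by induction on $k$ (each step a direct substitution into \eqref{rholeh}) that
\[
\bar{\rho}^{\,k}(\alpha,\beta)=\Bigl(\,\e^{(k)}[\![(a_k,\e_k)(a_{k+1},\e_{k+1})\dots]\!],\ -\e^{(k)}\llangle(\e_{k-1}/a_{k-1})\cdots(\e_0/a_0)(\e_{-1}/a_{-1})\dots\rrangle\,\Bigr),
\]
the indices being read cyclically from the period. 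In words: the forward tail is just shifted, while the backward string grows by the reversed block of already-consumed digits.

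For part (1) I would set $k=r$. Periodicity of $\alpha$ makes the Lehner tail $(a_r,\e_r)(a_{r+1},\e_{r+1})\dots$ coincide with $(a_0,\e_0)(a_1,\e_1)\dots$, so the first coordinate returns to a scalar multiple of $\alpha$. For the second coordinate, the prepended block $(\e_{r-1}/a_{r-1})\cdots(\e_0/a_0)$ is exactly one period of the Farey expansion of $\beta$ — here I use that the Farey period of $\beta$ is the \emph{reverse} of the Lehner period of $\alpha$, which is built into the hypotheses — so the whole string is again the periodic expansion of $\beta$ and the second coordinate returns to the same scalar multiple of $\beta$. Unwinding the recursion, the common factor is $\e^{(r)}/\e=(-\e_0)(-\e_1)\cdots(-\e_{r-1})\in\{\pm1\}$, which is the sign recorded in part (1).

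Part (2) is then immediate. By part (1), $\bar{\rho}^{\,r}(\alpha,\beta)$ is again a point of the periodic form with the \emph{same} digit-signs $\e_0,\dots,\e_{r-1}$: negating both coordinates (the case $(-\e_0)\cdots(-\e_{r-1})=-1$) leaves the Lehner digits $(a_j,\e_j)$ unchanged and merely toggles which component of $\SSS$ the point sits in. Applying part (1) a second time therefore multiplies by the same sign $(-\e_0)\cdots(-\e_{r-1})$, so $\bar{\rho}^{\,2r}(\alpha,\beta)=\bigl((-\e_0)\cdots(-\e_{r-1})\bigr)^2(\alpha,\beta)=(\alpha,\beta)$.

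The main obstacle is the sign bookkeeping, and specifically checking that the per-period factor is genuinely the same on both branches of $\SSS$. When $(-\e_0)\cdots(-\e_{r-1})=-1$ the map $\bar{\rho}^{\,r}$ carries the point from $(1,2)\times(-\infty,1)$ to its mirror in $(-2,-1)\times(-1,\infty)$, so in the second application of part (1) one feeds in a point whose first coordinate has the opposite sign $\e$; I would verify (directly from the recursion $\e^{(k+1)}=-\e^{(k)}\e_k$, or via the orientation bit carried by the third coordinate of $\tilde{\mathcal{L}}$ in Theorem \ref{commuteleh}) that this change of $\e$ does not feed back into the accumulated factor, which depends only on the shared digit-signs $\e_j$. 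The reversal of the period in passing between the Lehner expansion of $\alpha$ and the Farey expansion of $\beta$ is the other point that demands care.
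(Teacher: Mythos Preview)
Your proposal is correct and follows essentially the same approach as the paper: both compute what one application of $\bar\rho$ does to the periodic two-sided expansion via \eqref{rholeh} (delete/prepend one digit and flip sign by $-\e_0$), then iterate using periodicity; your inductive formulation with $\e^{(k+1)}=-\e^{(k)}\e_k$ is exactly the sign-tracking the paper leaves implicit in ``repeated application gives \eqref{rho_r}.'' Your extra paragraph on why the second application of part (1) picks up the \emph{same} factor $(-\e_0)\cdots(-\e_{r-1})$ is a useful clarification the paper's one-line computation of $\bar\rho^{\,2r}$ skips over.
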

\begin{proof}

 Note that 
 \begin{align*}
 \bar\rho(\alpha,\beta)&=(-\e_0\e[\![\overline{(a_1,\e_1)\ldots(a_{r-1},\e_{r-1})(a_0,\e_0)}]]\!], \e_0\epsilon\llangle\overline{(\e_0/a_0)(\e_{r-1}/a_{r-1})\ldots(\e_1/a_1)}\rrangle)\\
 &=(-\e_0\epsilon)([\![\overline{(a_1,\e_1)\ldots(a_{r-1},\e_{r-1})(a_0,\e_0)}]]\!], -\llangle\overline{(\e_0/a_0)(\e_{r-1}/a_{r-1})\ldots(\e_1/a_1)}\rrangle).
 \end{align*} Repeated application gives $\ref{rho_r}$. Continuing to apply $\bar\rho$ gives 
 \begin{align*}
 \bar\rho\ ^{2r}(\alpha,\beta)
 &=\big((-\e_0)(-\e_1)\cdots(-\e_{r-1})\big)^2\epsilon([\![\overline{(a_0,\e_0)\ldots(a_{r-1},\e_{r-1})}]\!], -\llangle\overline{(\e_{r-1}/a_{r-1})\ldots(\e_0/a_0)}\rrangle)\\
 &=(\alpha,\beta).\qedhere
 \end{align*}
 
\end{proof}

\begin{proposition}\label{lehner_pureperiod}
A real number $\alpha \in(1,2)$ has a purely periodic Lehner expansion if and only if
$\alpha$ is a quadratic irrational with $  \bar\alpha <1$. Furthermore, if
\begin{equation}\label{eq_l_pureperiod}
\alpha =[\![ \, \overline{(a_0,\e_0)(a_1,\e_1) \ldots (a_{r-1},\e_{r-1})} \,]\!],
\end{equation}
then
\begin{equation}\label{eq_f_pureperiod}
\balpha=-\llangle \,\overline{(\e_{r-1}/a_{r-1}) \ldots (\e_0/a_0)} \,\rrangle .
\end{equation}
\end{proposition}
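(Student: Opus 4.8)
The plan is to mirror the classical Galois argument for regular continued fractions, using $\bar\rho$ and its compatibility with Galois conjugation as the engine. The two implications split naturally: the forward direction (pure periodicity $\Rightarrow$ quadratic irrational with $\balpha<1$) will simultaneously deliver the conjugate formula \eqref{eq_f_pureperiod}, while the reverse direction (quadratic irrational with $\balpha<1$ $\Rightarrow$ pure periodicity) will rest on the invertibility of $\bar\rho$ established in Theorem \ref{commuteleh}.

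For the forward direction I would suppose $\alpha$ has the purely periodic expansion \eqref{eq_l_pureperiod} and set $\beta:=-\llangle\overline{(\e_{r-1}/a_{r-1})\ldots(\e_0/a_0)}\rrangle$. Since every Farey value lies in $[-1,\infty)$ and $\beta$ is irrational, $\beta<1$, so $(\alpha,\beta)\in\SSS$ and Lemma \ref{lehner_rho_period} applies, giving $\bar\rho^{\,r}(\alpha,\beta)=c\,(\alpha,\beta)$ with $c=\e\prod_i(-\e_i)\in\{\pm1\}$ by part (1). The key point is that, by \eqref{rholeh}, each application of $\bar\rho$ acts on both coordinates by the same integer Möbius map $z\mapsto\frac{1}{\e a_0-z}$, whose parameters depend only on the forward coordinate; hence the first $r$ steps compose to a single $N\in\PSL$ applied identically to each coordinate, with $N(\alpha)=c\alpha$ and $N(\beta)=c\beta$. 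Writing $N=\left(\begin{smallmatrix} p&q\\ s&t\end{smallmatrix}\right)$ with integer entries, both $\alpha$ and $\beta$ satisfy $cs\,z^2+(ct-p)z-q=0$. Because $\alpha\neq\beta$ this equation must be genuinely quadratic, so $\alpha$ is a quadratic irrational and $\beta$ is its Galois conjugate: $\balpha=\beta<1$, which is exactly \eqref{eq_f_pureperiod}.

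For the reverse direction, let $\alpha\in(1,2)$ be a quadratic irrational with $\balpha<1$, so $(\alpha,\balpha)\in\SSS$. Since $\alpha$ is a quadratic irrational its regular continued fraction is eventually periodic (Lagrange), and the block substitution \eqref{leh_insert} then makes its Lehner expansion eventually periodic as well, exactly as in Corollary \ref{farey_quad}. The structural observation that drives the argument is that the $\bar\rho$-orbit of $(\alpha,\balpha)$ stays on the diagonal $\{(\delta,\bar\delta)\}$: each step applies one integer Möbius map $\phi$ to both coordinates, and Galois conjugation commutes with $\phi$, so if $\gneg=\overline{\gplus}$ then $\phi(\gneg)=\overline{\phi(\gplus)}$ and the backward coordinate remains the conjugate of the forward one throughout. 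Eventual periodicity of the Lehner expansion means the forward coordinates $\alpha_n=\bar\rho^{\,n}(\alpha,\balpha)_1$ form an eventually periodic sequence of reals, say $\alpha_{s+p}=\alpha_s$; the diagonal property forces $\overline{\alpha_{s+p}}=\overline{\alpha_s}$ too, so $\bar\rho^{\,p}$ fixes $(\alpha_s,\overline{\alpha_s})$. Invertibility of $\bar\rho$ (Theorem \ref{commuteleh}) then lets me apply $\bar\rho^{-s}$ to conclude $\bar\rho^{\,p}(\alpha,\balpha)=(\alpha,\balpha)$, and reading off one Lehner digit per step shows the digit string of $\alpha$ repeats with period $p$ from the start, i.e. the expansion is purely periodic.

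The main obstacle is the forward direction's passage from the fixed-point relation to the conjugacy: one must ensure the composite $N$ yields a genuinely quadratic (nondegenerate) equation rather than a linear one, which is precisely where the hypothesis $\alpha\neq\beta$ (equivalently $\balpha<1<\alpha$) does the real work. By contrast the reverse direction is comparatively soft once diagonal-invariance and bijectivity of $\bar\rho$ are in hand; the only subtlety there is confirming that eventual periodicity of the \emph{forward} coordinate alone suffices, which the diagonal property guarantees by pinning the backward coordinate to the conjugate.
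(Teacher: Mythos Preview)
Your forward direction is essentially the paper's own argument: define $\beta$ as the reversed periodic Farey value, use Lemma~\ref{lehner_rho_period} to exhibit an element of $\PSL$ for which $\alpha$ and $\beta$ are both fixed (the paper uses part~(2) with period $2r$ to get an honest fixed point, while you use part~(1) and carry the sign $c$), and conclude that $\alpha,\beta$ are Galois conjugates. One small point you leave implicit: the quadratic $cs\,z^2+(ct-p)z-q=0$ could in principle collapse to $0=0$, which would force $N=I$ in $\PSL$; this is ruled out because $\rho^r$ moves $\xi_\gamma$ a positive hyperbolic distance along $\gamma$, but you should say so.

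Your reverse direction is correct but follows a genuinely different route from the paper. The paper proceeds by reduction theory: for a fixed discriminant $D$ it bounds the coefficients $A,B,C$ of any quadratic irrational in $(1,2)$ with conjugate below $1$ (splitting into the cases $\balpha<0$ and $0<\balpha<1$), so only finitely many such numbers exist; pigeonhole on the $\bar\rho$-orbit of $(\alpha,\balpha)$ plus invertibility then gives pure periodicity. You instead import eventual periodicity of the Lehner expansion from Lagrange together with \eqref{leh_insert}, and then exploit the structural fact that $\bar\rho$ applies the \emph{same} integer M\"obius map to both coordinates, so the orbit of $(\alpha,\balpha)$ stays on the conjugate diagonal and eventual periodicity of the first coordinate propagates to the pair; invertibility of $\bar\rho$ finishes as before. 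Your argument is shorter and sidesteps the case analysis, at the price of treating Lagrange's theorem as a black box, whereas the paper's approach is self-contained and closer to classical reduction theory for indefinite forms. One detail you skate over: eventual periodicity of the Lehner digits gives eventual periodicity of $|\alpha_n|$, and the sign of $\alpha_n$ may only recur with period $2p$; this is harmless (double the period if needed) but should be acknowledged.
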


\begin{proof}
In one direction, suppose that $\alpha$ is given by \eqref{eq_l_pureperiod}. Consider
the geodesic $\gamma \in \AAA $ with endpoints at $\gplus=\alpha$ and
$\gneg=\beta =- \llangle \,\overline{(\e_{r-1}/a_{r-1}), \ldots, (\e_0/a_0)} \,\rrangle<1$. Lemma \ref{lehner_rho_period} shows that the geodesic $\gamma$ is fixed by $\rho^{2r}$, so it is fixed by some $M=
\left(\begin{smallmatrix}
 a  & b\\c& d 
\end{smallmatrix}\right)
 \in\PSL$, $M\neq I$. Hence both $\alpha$ and $\beta$ are fixed by $M$.  In particular, $\alpha= \frac{(a-d)+ \sqrt{(a+d)^2-4}}{2c},\beta=\balpha=\frac{(a-d)- \sqrt{(a+d)^2-4}}{2c}$.

In the opposite direction, suppose that $A\alpha^2+B\alpha+C=0$ with $\balpha <1,$ and $A,B,C\in\Z, A\geq 1, (A,B,C)=1$. The quadratic irrationals $\alpha$, $\balpha$, $-\alpha$, $\overline{-\alpha}=-\balpha$, and $\rho(\alpha)=\frac{1}{-\alpha+a_0}$ have the same discriminant. In fact, $M\alpha$ has the same discriminant as $\alpha$ for all $M\in\PSL$. We will show that for a given discriminant, there are only finitely many quadratic irrationals in $(1,2)$ with conjugate less that $1$. We do this by considering two cases. Let $D=B^2-4AC$. 
\begin{description}
 \item[$\bar\alpha<0$] In this case, we note that $\alpha-\balpha=\frac{\sqrt{D}}{A}>1.$ Thus, $\sqrt{D}>A$, and there are finitely many options for $A$. Similarly, $\alpha\balpha=\frac{C}{A}<0$ implies $C<0$. Thus, $B^2-4AC\geq B^2$, giving $|B|\leq \sqrt{D}$. Thus, there are only finitely many options for $B$. Finally, we have that $C=\frac{B^2-D}{2A}$ must also have finitely many options.
 
 \item[$0<\balpha<1$] Since $\balpha$ is positive, we now have that $\alpha\balpha=\frac{C}{A}>0$ implies $C>0$. We know that  \begin{align*}
 \bar\alpha=\frac{-B-\sqrt{D}}{2A}&<1<\alpha=\frac{-B+\sqrt{D}}{2A},\\
 -B-\sqrt{D}&<2A<-B+\sqrt{D}\\
 -\sqrt{D}&<2A+B<\sqrt{D}.
\end{align*}
This allows us to conclude that $ (2A+B)^2<D$. Thus  
\begin{align*}
4A^2+4AB+B^2&<B^2-4AC\\
A+B&<-C\\
A+B+C&<0.
\end{align*}

Now, we can use that $D-(2A+B)^2=-4A(A+B+C)>0$ to rewrite $A$ as:

\[A=\frac{-4A(A+B+C)}{-4(A+B+C)}=\frac{D-(2A+B)^2}{-4(A+B+C)}<D.\]
Thus there are finitely many options for $A$. Since $-\sqrt{D}<2A+B<\sqrt{D}$ and $C=\frac{B^2-D}{2A}$, there are also finitely many options for $B$ and $C$.
\end{description}

Now we consider the geodesic $\gamma$ with endpoints $\alpha$ and $\balpha$. We know that the endpoints $(\alpha_i,\balpha_i)$ of $\overline\rho^i(\gamma)$ are quadratic irrationals with the same discriminant as $\alpha=\alpha_0$ satisfying $1<|\alpha_i|<2$ and $\operatorname{sgn}{(\alpha_i)} \balpha_i<1$ for all $i\in\Z$. Thus, the sequence of $(\alpha_i, \balpha_i)_{i\in\Z}$ takes on finitely many values. Hence, there are some $i<j$ such that $\overline\rho^i(\gamma)=\overline\rho^j(\gamma)$. Since $\overline\rho$ is an invertible function, we have that $\gamma$ is a periodic point of $\overline{\rho}$.

By Theorem \ref{commuteleh}, we find that $(\alpha, \bar\alpha, 1)$ is a periodic point of $\widetilde{\mathcal{L}}$, and thus $(\alpha,-\bar\alpha)$ is a periodic point of $\mathcal{L}$. Therefore, $\alpha$ has a purely periodic Lehner expansion, and $\alpha, -\balpha$ had the desired form.
\end{proof}

Note that in the statement of the preceding proposition, the assumption that $\alpha,\balpha\neq 1$ is required to conclude that $\alpha$ is irrational and vice versa. Removing those assumptions gives $1=[\![\overline{(2,-1)}]\!]=-\langle\! \langle\overline{(-1/2)} \rangle\! \rangle$ are the solution to $2-\frac{1}{x}=x$ and $-x=\frac{-1}{2-x}$, ie, $x^2-2x+1=0$. 

Define the $m$-tail of a Lehner expansion $\alpha=[\![(a_0,\e_0)(a_1,\e_1)\ldots]\!]$ to be \[t_m(\alpha)=(-\e_0)(-\e_1)\cdots(-\e_{m})[\![(a_{m+1},\e_{m+1})(a_{m+2},\e_{m+2})\ldots]\!].\]

\begin{proposition}\label{lehner_tails}
Two irrationals $\alpha,\beta\in(1,2)$ are $\PSL$-equivalent if and only if there exist $r, s > 0$ such
that
$t_r(\alpha) = t_s(\beta)$. Note that $\alpha$ is $\PSL$-equivalent to all of its tails.
\end{proposition}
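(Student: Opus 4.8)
The plan is to prove the parenthetical claim first, since both implications rest on it. Writing $\alpha=[\![(a_0,\e_0)(a_1,\e_1)\ldots]\!]$ and reading off \eqref{rholeh}, a short induction in the style of Lemma \ref{lehner_rho_period} shows that the forward endpoint of $\overline{\rho}^{\,m+1}(\alpha,\cdot)$ is exactly $t_m(\alpha)$, with accumulated sign $(-\e_0)(-\e_1)\cdots(-\e_m)$. The crucial structural point is that each application of $\overline{\rho}$ is the Möbius map $z\mapsto 1/(\e a_0-z)$, whose matrix $\left(\begin{smallmatrix}0&1\\-1&\e a_0\end{smallmatrix}\right)$ has determinant $1$ with integer entries and so lies in $\PSL$. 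Hence $t_m(\alpha)=R_m\alpha$ for some $R_m\in\PSL$, and $\alpha$ is $\PSL$-equivalent to each of its tails, which is the final sentence of the statement. The reverse implication is then immediate: if $t_r(\alpha)=t_s(\beta)$, then $\alpha$ is $\PSL$-equivalent to $R_r\alpha=t_r(\alpha)=t_s(\beta)=S_s\beta$, which is $\PSL$-equivalent to $\beta$, so $\alpha$ and $\beta$ are $\PSL$-equivalent by transitivity.

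For the forward implication I would argue geometrically through the cutting sequences. Suppose $M\alpha=\beta$ with $M\in\PSL$, and take oriented geodesics $\gamma_\alpha,\gamma_\beta\in\AAA$ with forward endpoints $\alpha,\beta$. Since $M$ preserves the Farey tessellation $\F$ and preserves orientation, $M\gamma_\alpha$ has the same cutting sequence as $\gamma_\alpha$ by Proposition \ref{rcf_lifts} and has forward endpoint $\beta$. Now $M\gamma_\alpha$ and $\gamma_\beta$ share the forward endpoint $\beta$, hence are forward-asymptotic and eventually cross exactly the same triangles of $\F$; consequently their returns to the cross-section $X$ synchronize and the forward Lehner digit strings of $\alpha$ and $\beta$ agree after finitely many steps. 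Translating ``the two geodesics return to $X$ at the same crossings'' through the identification of $t_m$ with the forward endpoint of $\overline{\rho}^{\,m+1}$ yields $t_r(\alpha)=t_s(\beta)$ for suitable $r,s$.

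The step I expect to be the main obstacle is controlling the \emph{sign} in this last equality. Agreement of the forward digit strings only forces $t_r(\alpha)=\pm t_s(\beta)$, and one checks that the sign ratio is invariant under simultaneously advancing $r$ and $s$ (the matched digits give equal update factors $-\e_{r+1}=-\tilde\e_{s+1}$), so a mismatch cannot be corrected after the fact; a persistent mismatch would make the two forward $\overline{\rho}$-orbits disjoint and contradict the claim. The resolution is to carry the orientation datum throughout: the sign of a tail is precisely the third coordinate of $J$ and $\widetilde{\mathcal L}$ from Theorem \ref{commuteleh}, namely whether the forward endpoint lies in $(1,2)$ or in $(-2,-1)$. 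Because $M$ and every elementary $\overline{\rho}$-map are orientation-preserving on $\H$, the oriented geodesics $M\gamma_\alpha$ and $\gamma_\beta$ return to $X$ on the same side, forcing the signs to coincide and giving $t_r(\alpha)=t_s(\beta)$ exactly.

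This sign condition is the Lehner-coded analogue of the parity constraint that separates $\PSL$-equivalence from $\mathrm{GL}_2(\Z)$-equivalence in the classical Serret theorem, so I would make it rigorous either by the orientation bookkeeping above or, as a cross-check, by passing to the regular continued fraction expansion via \eqref{leh_insert} and invoking the $\PSL$ form of Serret's theorem, after verifying that matching regular tails with matching preperiod parity correspond exactly to matching Lehner tails $t_r(\alpha)=t_s(\beta)$.
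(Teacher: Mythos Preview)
Your proof is correct and follows the same cutting-sequence route as the paper: both directions rest on $t_m(\alpha)$ being a $\PSL$-image of $\alpha$, and the converse uses that $M\gamma_\alpha$ and $\gamma_\beta$ share a forward endpoint and hence have eventually coinciding cutting sequences (the paper cites \cite[Lemma~3.3.1]{Ser} here). The only structural difference is that the paper pins both backward endpoints to a common $\delta<1$, which is cosmetic.

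Your sign discussion is more explicit than the paper, which dispatches it with the phrase ``along with \eqref{rholeh}''. Your orientation argument is sound in spirit but vaguer than necessary; there is a purely combinatorial resolution that makes the issue evaporate. Since $-\e_i=+1$ when the $(i{+}1)$-th cutting letter after $\xi_\gamma$ equals its predecessor and $-1$ when it differs, the product $(-\e_0)\cdots(-\e_m)$ telescopes: it equals $+1$ exactly when the $(m{+}1)$-th letter equals the fixed letter immediately before $\xi_\gamma$ (always $R$ when $\gplus\in(1,2)$). Hence the sign of $t_m(\alpha)$ is read off from the single letter $A_{m+1}$, and once the forward cutting sequences of $\alpha$ and $\beta$ coincide from positions $r{+}1$ and $s{+}1$ onward, the signs of $t_r(\alpha)$ and $t_s(\beta)$ coincide automatically. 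No separate orientation bookkeeping or appeal to Serret is needed.
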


\begin{proof}
The proof follows closely the outline of statement 3.3.3 in \cite{Ser} and Proposition 6 in \cite{BM1}.
In one direction, if $\alpha$ and $\beta$ are tail equivalent, then $\alpha$ and $\beta$ are
$\PSL$-equivalent because $t_1(\alpha)=\frac{1}{a_0-\alpha}\in\R$. We can repeat this process to find $M_r,N_s\in \PSL$ such that $M_r\alpha=t_r(\alpha)=t_s(\beta)=N_s\beta$. Thus, $N^{-1}_s M_r \alpha=\beta$.

Conversely, suppose that $g\alpha=\beta$ for some $g\in\PSL$.
Fix $\delta<1$ and consider the geodesics $\gamma,\gamma^\prime \in {\mathcal A}$ with
$\gamma_{-\infty} =\gamma_{-\infty}^\prime =\delta$, $\gamma_\infty=\alpha$ and
$\gamma^\prime_\infty=\beta$. Their cutting sequences are
$\ldots \xi_\gamma A_1 A_2 \ldots$ and respectively $\ldots \xi_\gamma B_1 B_2 \ldots$
with $A_i,B_i$  either $L$ or $R$. The geodesics
$\gamma^{\prime\prime} =g\gamma$ and $\gamma^\prime$ have the same endpoint $\beta$.
Since their $\PSL$-cutting sequences in $L$ and $R$ coincide
(cf. \cite[Lemma 3.3.1]{Ser}), their cutting sequences also coincide. Thus,
the cutting sequence of $\gamma^{\prime\prime}$ is of the form
$\xi_{\gamma^{\prime\prime}} \ldots B_k B_{k+1}\ldots$ for some $k\geqslant 1$.
As $\gamma$ and $\gamma^{\prime\prime}$ are $\PSL$-equivalent geodesics, their cutting sequences
(after equivalent initial points) will coincide, implying that the cutting sequences of
$\gamma$ and $\gamma^\prime$ are of the form $\ldots \xi_\gamma A_1 \ldots A_r D_1 D_2 \ldots$
and $\ldots \xi_\gamma B_1 \ldots B_s D_1 D_2 \ldots$ respectively. Along with \eqref{rholeh}, the cutting sequences coinciding imply that $t_r(\alpha) = t_s(\beta)$.
\end{proof}

\begin{proposition}\label{leh_period}
 The Lehner expansion of an irrational $\alpha$ is eventually periodic if and only if $\alpha$ is a
quadratic irrational.
\end{proposition}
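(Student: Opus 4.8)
The plan is to reduce both implications to the corresponding statement for regular continued fractions—the fact, quoted in Section~\ref{far_insert}, that an irrational has an eventually periodic regular continued fraction expansion if and only if it is a quadratic irrational (Lagrange's theorem)—by exploiting that the block conversion \eqref{leh_insert} between the two expansions respects eventual periodicity. Since $\alpha$ is irrational and Lehner expansions live on $[1,2)$, I would write its regular continued fraction as $[1;n_1,n_2,\dots]$, so that by \eqref{leh_insert} its Lehner expansion is the concatenation $B_1B_2B_3\cdots$ of the finite blocks $B_i=(2,-1)^{n_i-1}(1,+1)$, each depending only on $n_i$.

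For the implication that a quadratic irrational has an eventually periodic Lehner expansion, Lagrange's theorem gives that $(n_i)$ is eventually periodic, say $n_{i+p}=n_i$ for $i\geq m$. Because each block is determined by its index's value, $B_{i+p}=B_i$ for $i\geq m$, so the Lehner word is eventually periodic with repeating group $B_mB_{m+1}\cdots B_{m+p-1}$.

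For the converse, suppose the Lehner expansion is eventually periodic. I would first record that an irrational $\alpha$ has infinitely many digits $(1,+1)$: otherwise some tail would equal $[\![\overline{(2,-1)}]\!]$, which solves $x^2-2x+1=0$ and hence is rational, contradicting that the tails of an irrational are $\PSL$-images of $\alpha$ and therefore irrational. Consequently the Lehner word parses uniquely into the blocks $B_i$ (reading up to and including each $(1,+1)$), and the block lengths recover the $n_i$ via $n_i=1+\#\{(2,-1)\text{'s in }B_i\}$. Since the positions of the letter $(1,+1)$ in an eventually periodic word are themselves eventually periodic, the gaps between consecutive occurrences—hence the sequence $(n_i)$—are eventually periodic, and Lagrange's theorem then yields that $\alpha$ is a quadratic irrational.

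The one step requiring care is this synchronization in the converse direction: a period of the Lehner letter sequence need not align with a single block, so the argument that eventual periodicity of the letters forces eventual periodicity of the block lengths $(n_i)$ is exactly where the "infinitely many $(1,+1)$'s" observation is indispensable. As a cross-check, the converse also follows directly from the machinery already in place: eventual periodicity yields a purely periodic tail $t_m(\alpha)$, which is a quadratic irrational by Proposition~\ref{lehner_pureperiod}, and $\alpha$ is a $\PSL$-image of $t_m(\alpha)$ by Proposition~\ref{lehner_tails}, hence itself a quadratic irrational since $\PSL$ preserves $\Q$ and each real quadratic field.
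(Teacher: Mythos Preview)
Your argument is correct. The forward direction is the same as the paper's: both invoke the block conversion \eqref{leh_insert} (the paper by citing \cite[Corollary~1]{DK2}, you by spelling out that eventual periodicity of $(n_i)$ forces eventual periodicity of the concatenated blocks).

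For the converse, your primary route differs from the paper's. The paper passes to a purely periodic tail $t_r(\alpha)$, applies Proposition~\ref{lehner_pureperiod} to conclude that this tail is a quadratic irrational, and then uses Proposition~\ref{lehner_tails} to transport this back to $\alpha$ via a $\PSL$-element. Your main argument instead stays on the combinatorial level: you observe that an irrational must have infinitely many $(1,+1)$'s, parse the Lehner word into the blocks $B_i=(2,-1)^{n_i-1}(1,+1)$, and note that eventual periodicity of the letter sequence forces eventual periodicity of the gap sequence between successive $(1,+1)$'s, hence of $(n_i)$, so Lagrange applies. This is more elementary in that it sidesteps the discriminant-finiteness argument underlying Proposition~\ref{lehner_pureperiod}; on the other hand, it relies on the explicit block dictionary \eqref{leh_insert} rather than on the intrinsic dynamics set up in Section~\ref{surface}. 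Your closing ``cross-check'' is precisely the paper's proof.
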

\begin{proof}
 The insertion and singularization algorithm implies that quadratic irrationals are eventually periodic \cite[Corollary 1]{DK2}.

In the other direction, if $\alpha$ is eventually periodic, then there exists $r\geq 0$ such that $t_r(\alpha)$ is purely periodic. Since $\alpha$ and $t_r(\alpha)$ are tail equivalent, we may find $g\in\PSL$ such that 
$g\alpha =t_r(\alpha)=\e [\![ \overline{(a_0,\e_0)\ldots (a_{r-1},\e_{r-1})} ]\!]$ for some $g\in\PSL$ and
$\e \in \{\pm 1\}$. Proposition \ref{lehner_pureperiod} gives that $g\alpha$ is a quadratic irrational, hence $\alpha$ is a quadratic irrational. 
\end{proof}

Similarly, the $m$-tail of a Farey expansion $\beta= \llangle(f_1/b_1)(f_2/b_2)\dots\rrangle$ is 
\[\tau_m(\beta)=(-f_1)\cdots(-f_{m})\llangle(f_{m+1}/b_{m+1})(f_{m+2}/b_{m+2})\dots\rrangle.\]
We may recover the same result from Propositions \ref{lehner_tails} and \ref{leh_period} for the Farey expansions considering $\gneg$ instead of $\gplus.$

\begin{cor}\label{farey_tails}
Two irrationals $\alpha,\beta\in(-\infty,1)$ are $\PSL$-equivalent if and only if there exist $r, s > 0$ such
that
$\tau_r(\alpha) = \tau_s(\beta)$. Again, $\alpha$ is $\PSL$-equivalent to all of its tails.
\end{cor}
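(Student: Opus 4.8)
The plan is to mirror the proof of Proposition~\ref{lehner_tails}, but to read the cutting sequence toward the backward endpoint $\gneg$ rather than the forward endpoint $\gplus$, as suggested by the remark that the result is recovered from Propositions~\ref{lehner_tails} and~\ref{leh_period} upon exchanging the roles of $\gplus$ and $\gneg$. For the forward implication I would first check that a single Farey tail $\tau_1$ is the restriction of a M\"obius transformation in $\PSL$. Writing $\beta=\llangle(f_1/b_1)(f_2/b_2)\dots\rrangle=\frac{f_1}{b_1+\beta'}$ with $\beta'=\llangle(f_2/b_2)(f_3/b_3)\dots\rrangle$, one solves $\beta'=\frac{f_1}{\beta}-b_1$ and, using $f_1^2=1$, obtains
\[
\tau_1(\beta)=-f_1\beta'=f_1b_1-\frac{1}{\beta},
\]
the action of $\left(\begin{smallmatrix}f_1b_1&-1\\1&0\end{smallmatrix}\right)\in\PSL$, since $f_1b_1\in\{+1,-2\}\subset\Z$. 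Iterating shows each tail $\tau_r(\alpha)$ is $\PSL$-equivalent to $\alpha$ and each $\tau_s(\beta)$ to $\beta$, so $\tau_r(\alpha)=\tau_s(\beta)$ forces $\alpha$ and $\beta$ to be $\PSL$-equivalent; this also gives the final assertion that $\alpha$ is $\PSL$-equivalent to all of its tails.

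For the converse, suppose $g\alpha=\beta$ for some $g\in\PSL$. I would fix a common forward endpoint $\mu\in(1,2)$ and form geodesics $\gamma,\gamma'\in\AAA$ with $\gamma_\infty=\gamma'_\infty=\mu$, $\gamma_{-\infty}=\alpha$, and $\gamma'_{-\infty}=\beta$, so that their forward cutting sequences eventually coincide while the backward portions encode, through Section~\ref{cut}, the Farey tails of $\alpha$ and $\beta$. Applying $g$ produces $\gamma''=g\gamma$, which shares the backward endpoint $\beta$ with $\gamma'$; since $\PSL$ preserves the Farey tessellation and orientation, the cutting sequences of $\gamma''$ and $\gamma'$ coincide (Series~\cite[Lemma 3.3.1]{Ser}), and since $\gamma$ and $\gamma''$ are $\PSL$-equivalent their cutting sequences coincide after suitable initial segments, possibly after a covering translation returning the lifts to $\AAA$ as in Lemma~\ref{geo_lifts_leh}. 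Reading the backward portions of these now-coinciding sequences through the dictionary of Section~\ref{cut}, the eventual agreement yields $\tau_r(\alpha)=\tau_s(\beta)$ for some $r,s>0$.

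The main obstacle I expect is the bookkeeping at the junction. I must match the index where the two backward cutting sequences first agree with the correct tail indices $r,s$, while tracking both the sign factors $(-f_i)$ in the definition of $\tau_m$ and the orientation sign $\e=\operatorname{sign}\gplus$ entering the identification $\gneg=-\e\llangle\dots\rrangle$; in particular one must reconcile the domain $(-\infty,1)$ of the backward endpoints with the interval $[-1,\infty)$ on which Farey expansions live. Because a single element of $\PSL$ can shift a cutting sequence by a block that is not a single Farey digit, the step from ``the cutting sequences eventually coincide'' to the clean identity $\tau_r(\alpha)=\tau_s(\beta)$ is not formal and must invoke the explicit correspondence of Section~\ref{cut}; the delicate case is the degenerate reading where a run of repeated letters straddles $\x$, corresponding to the $n_0=1$ versus $n_0>1$ split of Theorem~\ref{thm1}.
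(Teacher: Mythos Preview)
Your proposal is correct and follows essentially the same approach as the paper: for the forward direction you compute $\tau_1(\beta)=f_1b_1-\tfrac{1}{\beta}=\tfrac{f_1b_1\beta-1}{\beta}$ as the action of an element of $\PSL$, and for the converse you fix a common forward endpoint $\delta\in(1,2)$, set $\gneg=\alpha$, $\gneg'=\beta$, and then rerun the cutting-sequence argument of Proposition~\ref{lehner_tails} reading toward the backward endpoint, exactly as the paper does. Your third paragraph worries about bookkeeping (sign conventions, the domain mismatch between $(-\infty,1)$ and $[-1,\infty)$, the $n_0=1$ split) that the paper's proof simply elides by saying ``repeat the argument of Proposition~\ref{lehner_tails}'', so you are if anything more careful than the original.
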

\begin{proof}
In one direction, if $\alpha$ and $\beta$ are Farey-tail equivalent, then $\alpha$ and $\beta$ are $\PSL$-equivalent because $\tau_1(\alpha)=\frac{f_1b_1\alpha-1}{\alpha}\in\R$. We can repeat this process to find $M\in\PSL$ to move $\alpha$ to $\beta.$

In the other direction, we repeat a similar process from Proposition \ref{lehner_tails}. Here, we fix $\delta\in(1,2)$ and consider the geodesics $\gamma,\gamma'\in\AAA$ with $\gneg=\alpha, \gneg'=\beta$ and $\gplus=\gplus'=\delta.$ Now we have the cutting sequences $\dots A_2 A_1\x\dots$ and $\dots B_2 B_1\x\dots$ with $A_i, B_i$ either $L$ or $R.$ Finally, we repeat the argument of Proposition \ref{lehner_tails}.
\end{proof}

\begin{cor}\label{farey_period}
The Farey expansion of an irrational $x$ is eventually periodic if and only if $x$ is a quadratic irrational.
\end{cor}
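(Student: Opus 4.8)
The plan is to mirror the proof of Proposition \ref{leh_period}, substituting each Lehner ingredient by its Farey analogue. One direction is already in hand: if $x$ is a quadratic irrational, then Corollary \ref{farey_quad} gives that its Farey expansion is eventually periodic, so nothing remains to prove there. The entire content is therefore the converse, that eventual periodicity forces $x$ to be a quadratic irrational, and its structure parallels the second half of the proof of Proposition \ref{leh_period}.

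For the converse, I would first reduce to the purely periodic case: if the Farey expansion of $x$ is eventually periodic, then for some $r\geq 0$ the tail $\tau_r(x)$ is purely periodic. By Corollary \ref{farey_tails}, $x$ is $\PSL$-equivalent to each of its tails, so there is $g\in\PSL$ with $gx=\tau_r(x)$. Since every element of $\PSL$ acts by a M\"obius transformation with integer coefficients, being a quadratic irrational is preserved under $\PSL$; hence it suffices to prove that a purely periodic Farey expansion is a quadratic irrational.

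To establish that last fact, I would run the first half of the argument of Proposition \ref{lehner_pureperiod} on the backward endpoint. Writing the purely periodic block as $\beta=\llangle\overline{(\e_{s-1}/a_{s-1})\ldots(\e_0/a_0)}\rrangle$ and pairing it with the matching purely periodic Lehner number $\alpha$ supplied by Lemma \ref{lehner_rho_period}, I would take the geodesic $\gamma\in\AAA$ with $\gplus=\alpha$ and $\gneg=\beta$. Part (2) of Lemma \ref{lehner_rho_period} shows $\bar\rho^{2s}$ fixes $\gamma$; exactly as in the proof of Proposition \ref{lehner_pureperiod}, since $\bar\rho$ is induced by elements of $\PSL$, the geodesic $\gamma$ is fixed by some $M\in\PSL$ with $M\neq I$, so $\beta=\gneg$ is a fixed point of $M$, hence a root of a rational quadratic. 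Equivalently, one can bypass the pairing entirely: because $F$ acts as the shift on Farey digits, a purely periodic expansion of period $s$ satisfies $F^{s}(\beta)=\beta$, and $F^{s}$ is an integer M\"obius map, so $\beta$ is a quadratic irrational directly.

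The main obstacle I expect is bookkeeping rather than substance: keeping straight the sign factor $(-f_1)\cdots(-f_r)$ carried by the tail $\tau_r$, and the right-to-left reversal of digits between a Farey block and its companion Lehner block, so that Lemma \ref{lehner_rho_period} applies and $\gamma$ genuinely lands in $\AAA$. One must also verify that $M\neq I$, so that its fixed points are the quadratic irrational $\beta$ rather than every point of $\H$; the hypothesis that $x$ is irrational is exactly what rules out the degenerate identity $-1=\llangle\overline{(-1/2)}\rrangle$ noted after Proposition \ref{lehner_pureperiod}.
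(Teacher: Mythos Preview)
Your proposal is correct and follows essentially the same route as the paper. The paper's own proof simply says that the forward direction is Corollary \ref{farey_quad} and that ``the other direction follows from applying the same argument as in Proposition \ref{leh_period}''; your write-up unpacks what that same argument means on the Farey side (reduce to a purely periodic tail via Corollary \ref{farey_tails}, then show a purely periodic Farey expansion is a quadratic irrational via Lemma \ref{lehner_rho_period}/Proposition \ref{lehner_pureperiod} or, equivalently, via $F^{s}(\beta)=\beta$), which is exactly the intended reading.
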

\begin{proof}
 The insertion and singularization algorithm implies that quadratic irrationals are eventually periodic from Corollary \ref{farey_quad}. The other direction follows from applying the same argument as in Proposition \ref{leh_period}.
\end{proof}

\begin{proposition}\label{leh_closedgeo}
 A geodesic $\bar\gamma$ on $\MM$ is closed if and only if it has a lift $\gamma\in\AAA $ with purely periodic
endpoints $\gplus=\e[\![ \, \overline{(a_0,\e_0) \ldots (a_{r-1},\e_{r-1})} \,]\!]$ and $\gneg=-\e\llangle \,\overline{(\e_{r-1}/a_{r-1}) \ldots (\e_0/a_0)} \,\rrangle<1$ for some $\e\in\{\pm1\}$ and $(-\e_0)\cdots(-\e_{r-1})= 1$.
\end{proposition}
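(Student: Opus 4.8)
The plan is to identify closed geodesics on $\MM$ with periodic orbits of the first-return map $\bar\rho$ to the cross-section $X$, and then to read off the continued fraction data from periodicity of $\bar\rho$ using Lemma \ref{lehner_rho_period} and Proposition \ref{lehner_pureperiod}. The structural fact I will lean on throughout is that each elementary step $\rho(z)=\tfrac{1}{\e a_0-z}$ lies in $\PSL$, so that for any $r$ the composite $\bar\rho^{\,r}$ is realized on endpoints by a single element $M\in\PSL$.

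For the backward implication, suppose $\gamma\in\AAA$ has the stated purely periodic endpoints with $\Pi:=(-\e_0)(-\e_1)\cdots(-\e_{r-1})=1$. Lemma \ref{lehner_rho_period} then shows that $\bar\rho^{\,r}$ fixes $(\gplus,\gneg)$, i.e. the associated $M\in\PSL$ fixes both $\gplus$ and $\gneg$. Since $\gplus\ne\gneg$ are distinct real points, $M$ is neither elliptic nor parabolic; and $M\ne I$ because a single application of $\bar\rho$ advances the flow a positive time along $\gamma$, so $M$ translates nontrivially along $\gamma$. Hence $M$ is hyperbolic with axis $\gamma$, and $\bar\gamma=\pi(\gamma)$ is the closed geodesic carried by the cyclic group $\langle M\rangle$.

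For the forward implication, assume $\bar\gamma$ is closed, so it is the projection of the axis of some hyperbolic $M_0\in\PSL$, $M_0\ne I$. By Lemma \ref{geo_lifts_leh} it has a lift $\gamma\in\AAA$, which is the axis of a conjugate of $M_0$; thus $\gplus,\gneg$ are the two Galois-conjugate fixed points of a hyperbolic element, and $\gamma$ is a periodic point of the return map $\bar\rho$. Because $\gplus$ lies in $(1,2)$ or $(-2,-1)$ and is a quadratic irrational whose conjugate $\gneg$ satisfies $\gneg<1$ (resp. $\gneg>-1$), Proposition \ref{lehner_pureperiod} and its sign-symmetric form give that $\gplus=\e[\![\overline{(a_0,\e_0)\ldots(a_{r-1},\e_{r-1})}]\!]$ is purely periodic with $\gneg=-\e\llangle\overline{(\e_{r-1}/a_{r-1})\ldots(\e_0/a_0)}\rrangle$, which is exactly the required form. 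It remains only to secure $\Pi=1$: by Lemma \ref{lehner_rho_period}, $\bar\rho^{\,r}$ fixes $\gamma$ when $\Pi=1$ and otherwise sends $\gamma$ to the lift with negated endpoints, while $\bar\rho^{\,2r}$ fixes $\gamma$ unconditionally. So if $\Pi=1$ we use period $r$; if $\Pi=-1$ we simply repeat the periodic block once, presenting $\gplus$ with period $2r$, which names the same number and the same lift but has product $\Pi^2=1$.

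The step I expect to be the crux is the geometric dictionary underlying both directions: that a closed geodesic on $\MM$ is exactly a periodic orbit of $\bar\rho$ on the cross-section, and conversely that a $\bar\rho^{\,r}$-fixed lift produces a genuinely hyperbolic covering translation (neither the identity nor a parabolic) whose axis is $\gamma$. Once this is in place, the remaining work—tracking the sign product $\Pi$, invoking the quadratic-irrational/conjugate correspondence of Proposition \ref{lehner_pureperiod}, and performing the harmless doubling of the period when $\Pi=-1$—is routine bookkeeping.
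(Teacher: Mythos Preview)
Your argument is correct, but the forward implication takes a different route from the paper. The paper works entirely through the first-return dynamics: a closed $\bar\gamma$ means $\bar P^r(\xi_\gamma,u_\gamma)=(\xi_\gamma,u_\gamma)$ for some $r$, hence $\bar\rho^{\,r}(\gplus,\gneg)=(\gplus,\gneg)$; expanding $\bar\rho^{\,r}$ via \eqref{rholeh} immediately yields both the digit periodicity $(a_i,\e_i)=(a_{i+r},\e_{i+r})$ and the sign condition $(-\e_0)\cdots(-\e_{r-1})=1$ in one stroke, with no appeal to Proposition~\ref{lehner_pureperiod} and no need to double the period. You instead pass through the algebraic characterization: closed geodesic $\Rightarrow$ axis of a hyperbolic element $\Rightarrow$ $\gplus$ is a quadratic irrational with conjugate $\gneg$ in the correct range $\Rightarrow$ purely periodic by Proposition~\ref{lehner_pureperiod}, and then patch up the sign product by period-doubling. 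This works, and it is a nice reuse of Proposition~\ref{lehner_pureperiod}, but it is longer and leaves the sign condition as a separate bookkeeping step rather than obtaining it for free. The paper's approach also makes the aside ``and $\gamma$ is a periodic point of the return map $\bar\rho$'' (which you state but do not actually use) into the whole argument.
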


\begin{proof} We will use $\bar{P}$ to denote the map induced on $X$ by the map $\bar{\rho}$ on $\SSS$ defined in Section \ref{surface}. Recall that for $z\in\gamma\in\AAA$, $\rho(z)=\frac{1}{\operatorname{sign}(\gplus) a_0(\gplus)-z}$.

A geodesic $\bar\gamma$ is closed on $\MM$ if and only if there exists $r$ such that $\bar P^r(\x,u_\gamma)=(\x,u_\gamma)$ for $(\x,u_\gamma)\in X $. 

In one direction, from Lemma \ref{lehner_rho_period}, we know that if $\gplus=\e[\![ \, \overline{(a_0,\e_0) \ldots (a_{r-1},\e_{r-1})} \,]\!]$ and \\$\gneg=-\e\llangle \,\overline{(\e_{r-1}/a_{r-1}) \ldots (\e_0/a_0)} \,\rrangle<1$ for some $\e\in\{\pm1\}$ and $(-\e_0)\dots(-\e_{r-1})= 1$, then
$\bar\rho\ ^r(\gplus,\gneg)=(\gplus,\gneg)$. Thus, $\bar P^{r}(\x,u_\gamma)=(\x,u_\gamma)$, and $\rho^{r}(\x)$ is also a base point for $u_\gamma$. 

In the other direction, we assume that there exists some $r$ such that $\rho^r(\x)=\x$. Since $\x$ is determined by $(\gplus, \gneg)=(\e[\![(a_0,\e_0)(a_1,\e_1)\dots]\!],-\e\llangle(a_{-1},\e_{-1})(a_{-2},\e_{-2})\dots\rrangle)$, we also have that 
\begin{align*}\bar\rho\ ^r(\gplus, \gneg)&= \epsilon(-\e_0)\dots(-\e_{r-1})([\![(a_r,\e_r)(a_{r+1},\e_{r+1})\dots]\!],\llangle(\e_{r-1}/a_{r-1})\dots(a_{-1},\e_{-1})\dots\rrangle)
\\&=(\gplus,\gneg)=(\e[\![(a_0,\e_0)(a_1,\e_1)\dots]\!],-\e\llangle(\e_{-1}/a_{-1})(\e_{-2}/a_{-2})\dots\rrangle).\end{align*}
Thus, we find $(-\e_0)\cdots(-\e_{r-1})=1$ and $(a_i,\e_i)=(a_{i+r},\e_{i+r})$ for all $i\in\Z$.
\end{proof}

Using the fact that $\rho $ preserves lengths, we find that $d(\x,\y)=d(\rho(\x),\rho(\y))$. If we let $\rho(\y)=x+iy$, then 
\[\frac{|\rho(\gneg)-\rho(\y)|}{|\rho(\gplus)-\rho(\y)|}=\sqrt{\frac{x-\rho(\gneg)}{\rho(\gplus)-x}}, \qquad \frac{|\rho(\gneg)-\rho(\x)|}{|\rho(\gplus)-\rho(\x)|}=\sqrt{\frac{1-\rho(\gneg)}{\rho(\gplus)-1}}.
\]
Since $\rho(\x)$ lies on the geodesic  $\operatorname{sign}(x)[1,2]$, we find
$\left|x+iy-\operatorname{sign}(x)\frac{3}{2}\right|=\frac{1}{2}$. We also have that $\rho(\y)$ lies on the geodesic $
[\rho(\gplus),\rho(\gneg)]$ and $|x+iy-\frac{1}{2}(\gplus+\gneg)|=\frac{1}{2}(\gplus-\gneg)$. Thus, 
\[x=\operatorname{Re}\rho(\y)=\frac{2-\rho(\gplus)\rho(\gneg)}{3\operatorname{sign}(x)-\rho(\gplus)-\rho(\gneg)}.
\]

Note that $\operatorname{sign}(x)=\operatorname{sign}(\rho(\gplus))=-\e\e_0(\gplus)$. We find that 
\begin{equation}
 d(\x,\y)=\frac{1}{2}\log\left(\frac{\big(\rho(\gplus)+\e\e_0(\gplus)\big)\big(\rho(\gplus)+2\e\e_0(\gplus)\big)\big(1-\rho(\gneg)\big)}{\big(\rho(\gneg)+\e\e_0(\gneg)\big)\big(\rho(\gneg)+2\e\e_0(\gneg)\big)\big(1-\rho(\gplus)\big)}\right).
\end{equation}

\section{An alternate dual expansion to the Farey expansions}\label{alt_leh}
Schweiger \cite{Sch06} describes continued factions and their dual expansions using M\"obius transformations. The goal of this section is to define an alternate dual expansion of the Farey expansions that matches with Schweiger's definition of a \textit{natural dual}, define an alternate natural extension of the Farey expansions using the natural dual, and prove that the measure in Theorems \ref{form_natext_measure} and \ref{natmeasure} is invariant under the alternate natural extension. By doing this, we give a pair of dual continued fraction extensions that more closely match other dual pairs of continued fractions such as the even and odd continued fractions \cite{Sch1}, the $\alpha$-continued fractions \cite{Nak}, the Rosen continued fractions \cite{Ros}, and the $\alpha$-odd continued fractions \cite{BM2}.  We also change some of the notation from \cite{Sch06} to match the standard notation for M\"obius transformations.

\begin{definition}\cite[Definition 1]{Sch06} Let $B$ be an interval and $T: B\to B$ be a map. We call $(B,T)$ a \textit{M\"obius system} if there exists a countable sequence of intervals $(J_k), k\in I$ and an associated sequence of matrices
\[\alpha(k)=\begin{pmatrix}a_k&b_k\\c_k & d_k\end{pmatrix},\quad \det(\alpha(k))\neq 0\]
where
\begin{itemize}
    \item $\{\overline{J_k}\}$ partition $B,$
    \item $Tx=\frac{a_k x+b_k}{c_k x+d_k}$ for $x\in J_k,$
    \item $T|_{J_k}$ is a bijection from $J_k$ onto $B$.
\end{itemize}
\end{definition}
\noindent For generic continued fraction expansions, the countable partition is the set of intervals $B(a_1,\e_1)$, where the first digit of $x\in B(a_1,\e_1)$ is $(a_1,\e_1)$. 

Recall that the Gauss map for the Farey expansions is 
\[F(x):=
\begin{cases}
 \frac{-1}{x} -2&\textnormal{if }  x\in\left[-1,0\right)\\
 0 &\textnormal{if }  x=0\\
  \frac{1}{x}-1 &\textnormal{if }  x\in\left(0,\infty\right).
\end{cases}
\] 
In our case, we find for the Farey expansions, $B=(-1,\infty)$ with $B(2,-1)=(-1,0)$ and $B(1,+1)=(0,\infty).$ The associated matrices are \[M_{B(2,-1)}:=\begin{pmatrix}-2&-1 \\1&0\end{pmatrix},\quad\textnormal{for }B(2,-1)=[-1,0)\] and  \[M_{B(1,+1)}:=\begin{pmatrix}-1&1 \\1&0\end{pmatrix},\quad\textnormal{for }B(1,+1)=(0,\infty).\]

We also have a M\"obius system for the Lehner expansions. We need to use different notation for these expansions to distinguish this system from the one that generates the Farey expansions. Let $J_1=[\frac{3}{2},2)$ and $J_2=[1,\frac{3}{2}).$ Then  $\alpha(1)=\begin{pmatrix}0&1\\1&-1\end{pmatrix}$ and $\alpha(2)=\begin{pmatrix}0&1\\-1&2\end{pmatrix}$.

\begin{definition}\cite[Definition 2]{Sch06}
A M\"obius system $(B^*,T^*)$ is called a \textit{natural dual} of $(B,T)$ if there is a partition $\{J_k^*\},k\in I$ of $B^*$ such that $T^*y=\frac{a_k y+c_k}{b_k y +d_k}$. That is, $\alpha^*(k)$ is the transpose of $\alpha(k)$.

For continued fraction expansions as M\"obius systems, we write the partition as $\{B^*(a_1,e_1)\}$ and the corresponding matrices as $N_{B^*(a_1,e_1)}$. 
\end{definition}

The Lehner expansions are not the natural dual system to the Farey fractions. However, conjugating the $\alpha(k)$ by $\left(\begin{smallmatrix}
 0&1\\1&0
\end{smallmatrix}\right)$ gives 
\[N_{B^*(2,-1)}=M^T_{B(2,-1)}=\begin{pmatrix}2&-1 \\1&0\end{pmatrix},\quad\textnormal{for }B^*(2,-1)=\left(\frac{1}{2},\frac{2}{3}\right]\] and  \[N_{B^*(1,+1)}=M^T_{B(1,+1)}=\begin{pmatrix}-1&1 \\1&0\end{pmatrix},\quad\textnormal{for }B^*(1,+1)=\left(\frac{2}{3},1\right].\]
Thus, we define the natural dual expansions on $(\frac{1}{2},1]$ 
\[x=\cfrac{1}{a_0+\cfrac{\e_0}{a_1+\cfrac{\e_1}{\dots}}}=\frac{1}{[\![(a_0,e_0)(a_1,e_1)\dots]\!]}\]
The Gauss map for this continued fraction expansion $F^*:(\frac{1}{2},1]\to(\frac{1}{2},1]$ is given by:
\begin{equation*}
 F^*(x)=
\begin{cases}
\frac{1}{x}-1  &\textnormal{if } x\in(\frac{1}{2},\frac{2}{3}],\\
 \frac{-1}{x}+2 &\textnormal{if } x\in(\frac{2}{3},1].
\end{cases}
\end{equation*}
 By construction, $F^*$ is conjugate to $L$ by the map $x\mapsto\frac{1}{x}$ and to the Farey map $\tau$  in Section \ref{leh} by $x\mapsto\frac{1}{x+1}$. We find that $(a_i,\e_i)=
\begin{cases}
(1,+1) & \textnormal{if } (F^*)^{i}(x)\in(\frac{1}{2},\frac{2}{3}],\\
 (2,-1) & \textnormal{if } (F^*)^{i}(x)\in(\frac{2}{3},1],
\end{cases}$
agreeing with the fact that the Lehner expansion of $\frac{1}{x}$ comes from $L^{i}(\frac{1}{x})$. Since $F^*$ is conjugate to $L$, we can find an infinite $F^*$ invariant measure by pushing forward the invariant density $\frac{dx}{x+1}$ to get:
\begin{proposition}
 The infinite measure $\frac{dx}{x(1-x)}$ is $F^*$-invariant.
\end{proposition}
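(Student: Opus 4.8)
The plan is to exploit the conjugacy between $F^*$ and the Lehner Gauss map $L$ recorded just above the statement, and simply transport the known $L$-invariant measure through it. Recall from Section \ref{leh} that $L$ on $[1,2)$ carries the $\sigma$-finite invariant measure with density $\frac{1}{y-1}$, and that $F^*$ is conjugate to $L$ by the involution $h(y)=\frac{1}{y}$, which interchanges $[1,2)$ and $(\frac{1}{2},1]$. First I would nail down the conjugacy branch by branch: for $x\in(\frac{2}{3},1]$ one has $\frac{1}{x}\in[1,\frac{3}{2})$, so $L(\frac{1}{x})=\frac{1}{2-1/x}=\frac{x}{2x-1}=\frac{1}{F^*(x)}$, and for $x\in(\frac{1}{2},\frac{2}{3}]$ one has $\frac{1}{x}\in[\frac{3}{2},2)$, so $L(\frac{1}{x})=\frac{1}{1/x-1}=\frac{x}{1-x}=\frac{1}{F^*(x)}$. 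This gives $L\circ h=h\circ F^*$, i.e. $F^*=h\circ L\circ h$ since $h$ is an involution.

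Once the conjugacy is in hand, invariance is formal: writing $\mu_L$ for the measure with density $\frac{1}{y-1}$, the relation $F^*\circ h=h\circ L$ yields $F^*_*(h_*\mu_L)=(h\circ L)_*\mu_L=h_*(L_*\mu_L)=h_*\mu_L$, so $h_*\mu_L$ is $F^*$-invariant. It then remains only to identify the pushforward explicitly via the change of variables $y=\frac{1}{x}$, $dy=-\frac{1}{x^2}\,dx$:
\[
\frac{dy}{y-1}=\frac{-x^{-2}\,dx}{x^{-1}-1}=\frac{-x^{-2}\,dx}{(1-x)/x}=-\frac{dx}{x(1-x)},
\]
whose absolute value is exactly $\frac{dx}{x(1-x)}$ on $(\frac{1}{2},1]$, establishing the claim.

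As an independent check that removes any dependence on the precise normalization of the $L$-invariant measure, I would alternatively verify the transfer-operator identity directly. The two inverse branches of $F^*$ at $x\in(\frac{1}{2},1]$ are $y_1=\frac{1}{x+1}\in[\frac{1}{2},\frac{2}{3})$ and $y_2=\frac{1}{2-x}\in(\frac{2}{3},1]$, each with $|(F^*)'(y_i)|=y_i^{-2}$. Setting $\varphi(x)=\frac{1}{x(1-x)}$, a short computation gives $\frac{\varphi(y_1)}{(x+1)^2}=\frac{1}{x}$ and $\frac{\varphi(y_2)}{(2-x)^2}=\frac{1}{1-x}$, so the sum over preimages is $\frac{1}{x}+\frac{1}{1-x}=\frac{1}{x(1-x)}=\varphi(x)$. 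There is no genuine obstacle here: the only points demanding care are checking that the two branch inverses land in the correct subintervals so that every preimage is counted exactly once, and remembering to take the absolute value of the Jacobian; the remaining algebra is elementary.
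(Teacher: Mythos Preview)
Your proof is correct and follows the same approach as the paper: the paper's entire argument is the one-line remark that $F^*$ is conjugate to $L$ via $x\mapsto 1/x$, so the $F^*$-invariant measure is obtained by pushing forward the $L$-invariant density $\tfrac{dy}{y-1}$. You carry this out in full detail (verifying the conjugacy on each branch and computing the change of variables), and your additional transfer-operator check is a welcome independent confirmation that the paper does not include.
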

\noindent Since the regular continued fraction expansion of numbers in $[\frac{1}{2},1]$ begins with $n_1=1$, we can use the same insertion and singularization algorithms as \eqref{leh_insert}.

\begin{theorem}\cite[Theorem 1 rephrased]{Sch06}\label{form_natext_measure}
The map defined by dual pairs $\bar T: B\times B^*\to B\times B^*$ for a Gauss map $T: B\to B$ is given by $\bar{T}(x,y)=(M_{(a_i,e_i)} x, N_{(a_i,e_i)}^{-1} y)$ for $x\in B(a_i,e_i)$. The measure  $\frac{dxdy}{(1+xy)^2}$ is $\bar T$-invariant .
\end{theorem}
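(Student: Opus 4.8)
The plan is to verify invariance by a direct change-of-variables computation carried out one branch at a time, the whole point being that the transpose relationship $N_{(a_i,e_i)}=M_{(a_i,e_i)}^{T}$ forces a clean factorization of the new density factor $1+uv$. Since $T|_{J_k}$ is a bijection of $B(a_i,e_i)$ onto $B$ and the inverse dual branch $N_{(a_i,e_i)}^{-1}$ maps $B^*$ onto the dual cylinder $B^*(a_i,e_i)$, the map $\bar T$ restricted to $B(a_i,e_i)\times B^*$ is a diffeomorphism onto $B\times B^*(a_i,e_i)$, and these images tile $B\times B^*$. Hence it suffices to check that the density $\frac{1}{(1+xy)^2}$ is preserved branch-by-branch, after which $\bar T$-invariance on all of $B\times B^*$ follows by the usual change-of-variables formula summed over the partition.

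First I would fix a branch, writing $M=M_{(a_i,e_i)}=\left(\begin{smallmatrix}a&b\\c&d\end{smallmatrix}\right)$ and $N=M^{T}$, and set $u=M x=\frac{ax+b}{cx+d}$ and $v=N^{-1}y$. Computing $N^{-1}$ as a M\"obius transformation (and using $\det N=\det M$) gives $v=\frac{dy-c}{a-by}$. Because $u$ depends only on $x$ and $v$ only on $y$, the Jacobian is diagonal, and the standard derivative formula for M\"obius maps yields $\frac{\partial u}{\partial x}=\frac{\det M}{(cx+d)^2}$ and $\frac{\partial v}{\partial y}=\frac{\det M}{(a-by)^2}$, so that $\left|\frac{\partial(u,v)}{\partial(x,y)}\right|=\frac{(\det M)^2}{(cx+d)^2(a-by)^2}$.

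The crux is the algebraic identity for the density factor: expanding over the common denominator $(cx+d)(a-by)$, one finds
\[
1+uv=\frac{(cx+d)(a-by)+(ax+b)(dy-c)}{(cx+d)(a-by)}=\frac{(ad-bc)(1+xy)}{(cx+d)(a-by)}=\frac{\det M\,(1+xy)}{(cx+d)(a-by)},
\]
where the cross terms linear in $x$ and in $y$ cancel \emph{precisely} because $N$ is the transpose of $M$ — this is the structural reason that Schweiger's \emph{natural} dual is the correct object here. Squaring and combining with the Jacobian gives the pointwise density preservation $\frac{1}{(1+uv)^2}\left|\frac{\partial(u,v)}{\partial(x,y)}\right|=\frac{1}{(1+xy)^2}$ on the branch.

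The main obstacle is genuinely only making the numerator cancellation in $1+uv$ transparent; everything else is bookkeeping. The one subtlety worth stating explicitly is that $1+xy>0$ throughout $B\times B^*$ (in the Farey case $x>-1$ and $y\in(\tfrac12,1]$), so the density is finite and positive and no branch meets the pole $1+xy=0$; this guarantees the change of variables is valid on each piece, and summing over the partition reconstructs invariance of $\frac{dxdy}{(1+xy)^2}$ under $\bar T$.
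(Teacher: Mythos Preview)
Your proof is correct. The branchwise change-of-variables computation, hinging on the identity $1+uv=\dfrac{\det M\,(1+xy)}{(cx+d)(a-by)}$ that the transpose condition $N=M^T$ makes possible, is exactly how one proves this in general and is essentially Schweiger's own argument in \cite{Sch06}.

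The paper, however, does not reprove the general statement; it cites it from \cite{Sch06} and then re-derives the specific Farey instance (Theorem~\ref{natmeasure} for $\bar F$) by a genuinely different route. Namely, it builds the conjugacy $J^*\circ\bar\rho=\tilde F\circ J^*$ with the geodesic-flow map $\bar\rho$ and pushes forward the Hopf measure $\dfrac{d\alpha\,d\beta}{(\alpha-\beta)^2}$ through $J^*(x,y)=\operatorname{sign}(x)(\tfrac{1}{x},-y,1)$, exactly parallel to Corollary~\ref{Lmeasure}. Your approach is more elementary and works uniformly for any M\"obius system and its natural dual, with no reference to hyperbolic geometry; the paper's approach is less general but ties the invariant measure to the geodesic flow on $T_1\MM$, which is the organizing theme of the article. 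Both are valid; they simply answer different questions---yours explains \emph{why} the transpose is the right dual, theirs explains \emph{where} the density $\tfrac{1}{(1+xy)^2}$ comes from geometrically.
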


The measure $\frac{dxdy}{(1+xy)^2}$ is invariant for the natural extension of the Gauss map of many other types of continued fractions, such as the even and odd continued fractions \cite{Sch1}, the $\alpha$-continued fractions \cite{Nak}, the Rosen continued fractions \cite{Ros}, and the $\alpha$-odd continued fractions \cite{BM2}. By constructing the natural dual pair, we will also construct a natural extension to $F$ with this invariant measure.

In order to prove Theorem \ref{form_natext_measure} in this setting, we construct the natural extension of this alternate expansion, $\bar F:(\frac{1}{2},1]\times[-1,\infty) \to (\frac{1}{2},1]\times[-1,\infty)$ given by:
\begin{equation}
 \bar F(x,y)=\left(\e_0\left(\frac{1}{x}-a_0\right),\frac{\e_0}{a_0+y}\right)=
\begin{cases}
(\frac{1}{x}-1,\frac{1}{1+y})&\textnormal{if }x\in(\frac{1}{2},\frac{2}{3}],\\
(\frac{-1}{x}+2,\frac{-1}{2+y})&\textnormal{if }x\in(\frac{2}{3},1].
\end{cases}
\end{equation}
Again, we see that $\bar F$ is conjugate to $\mathcal{L}$ by $(\frac{1}{x},y)$. Since $\bar{F}$ is conjugate to $\mathcal{L}$ by an isomorphism, it is also a natural extension of $F$. We could recover the $\bar{F}$-invariant measure by pushing forward the $\mathcal{L}$-invariant measure. Below, we repeat the construction of Theorem \ref{commuteleh} and Corollary \ref{Lmeasure}

For the geodesic coding, we define $J^*:\SSS \to(\frac{1}{2},1]\times[-1,\infty)\times\{\pm1\}$ by \[J^*(x,y)=\operatorname{sign}(x)\left(\frac{1}{x},-y,1\right)= \begin{cases}
(\frac{1}{x},-y,1) & \textnormal{ if } x\in\left(1,2\right),\ y<1, \\
(\frac{-1}{x},y,-1) & \textnormal{ if } x\in\left(-2,-1\right),\ y>-1,
\end{cases}\]
and $\tilde F: (\frac{1}{2},1]\times[-1,\infty)\times\{\pm1\}\to (\frac{1}{2},1]\times[-1,\infty)\times\{\pm1\}$ by $\tilde F(x,y,\e)=(\bar F(x,y),-\e_0(x)\e)$.
As in Theorem \ref{commuteleh}, we find that $J^*\circ\bar\rho=\tilde F\circ J^*$. Finally, we recover the result from Theorem \ref{form_natext_measure} that:
 
\begin{theorem}\label{natmeasure}
 The infinite measure $\frac{dxdy}{(1+xy)^2}$ is $\bar F$-invariant.
\end{theorem}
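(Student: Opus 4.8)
The plan is to imitate the proof of Corollary \ref{Lmeasure}, transporting the $\PSL$-invariant measure on the space of geodesics through the conjugacy $J^*$. The starting point is that the measure $\frac{dx\,dy}{(x-y)^2}$ on $\SSS$ --- the restriction to endpoint coordinates $(\gplus,\gneg)=(x,y)$ of the Hopf measure $\frac{d\alpha\,d\beta\,d\theta}{(\alpha-\beta)^2}$ for the geodesic flow \cite{Hopf} --- is invariant under $\bar\rho$. Indeed, $\bar\rho$ acts on $\SSS$ piecewise by the M\"obius maps $z\mapsto\frac{1}{\e a_0-z}$, and a direct computation (equivalently, the $\PSL(2,\R)$-invariance of the kernel $\frac{d\alpha\,d\beta}{(\alpha-\beta)^2}$ on pairs of boundary points) shows each branch preserves this density; this is exactly the fact already used in Corollary \ref{Lmeasure}.

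Next I would invoke the commuting relation $J^*\circ\bar\rho=\tilde F\circ J^*$ established just above, so that $\tilde F=J^*\circ\bar\rho\circ (J^*)^{-1}$. Since $J^*$ is an invertible measurable map, the pushforward $(J^*)_*\bigl(\frac{dx\,dy}{(x-y)^2}\bigr)$ is automatically $\tilde F$-invariant, and it remains only to identify this pushforward explicitly.

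The computational heart is the change of variables. Writing $J^*(x,y)=\operatorname{sign}(x)\bigl(\frac1x,-y,1\bigr)$, I set $X=\frac{\operatorname{sign}(x)}{x}=\frac{1}{|x|}$ and $Y=-\operatorname{sign}(x)\,y$, so that $x=\frac{\operatorname{sign}(x)}{X}$ and $|dx\,dy|=x^2\,|dX\,dY|$. On each branch one checks $x-y=\operatorname{sign}(x)\frac{1+XY}{X}$, hence $(x-y)^2=x^2(1+XY)^2$, and therefore $\frac{dx\,dy}{(x-y)^2}=\frac{dX\,dY}{(1+XY)^2}$. The reciprocal $\frac1x$ built into $J^*$ is precisely what converts the $\PSL$-invariant kernel $(x-y)^{-2}$ into the dual kernel $(1+XY)^{-2}$, matching Schweiger's natural-dual formulation in Theorem \ref{form_natext_measure}.

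Finally, projecting $\tilde F$ on $(\frac12,1]\times[-1,\infty)\times\{\pm1\}$ to $\bar F$ on $(\frac12,1]\times[-1,\infty)$ by forgetting the sign coordinate $(X,Y,\e)\mapsto(X,Y)$ carries the $\tilde F$-invariant measure to $\frac{dX\,dY}{(1+XY)^2}$, which is thus $\bar F$-invariant. The only genuinely fiddly point is keeping the two branches of $J^*$ and their orientations straight in the change of variables, but since both branches yield the identical density transformation this causes no real difficulty. As a sanity check, one can bypass the geodesic picture entirely and push the $\mathcal L$-invariant measure $\frac{dx\,dy}{(x+y)^2}$ of Corollary \ref{Lmeasure} directly through the stated conjugacy $(x,y)\mapsto(\frac1x,y)$ between $\mathcal L$ and $\bar F$; the same one-line computation again produces $\frac{dX\,dY}{(1+XY)^2}$.
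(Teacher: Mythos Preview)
Your proof is correct and follows essentially the same approach as the paper: the paper sets up $J^*$ and the commuting relation $J^*\circ\bar\rho=\tilde F\circ J^*$ precisely in order to repeat the argument of Corollary~\ref{Lmeasure}, and your change-of-variables computation is exactly the step the paper leaves implicit. Your remark that one can alternatively push forward the $\mathcal L$-invariant measure through $(x,y)\mapsto(\tfrac1x,y)$ is also noted in the paper as a possible shortcut.
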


\section{Acknowledgements}

The first draft of this work was completed while the author was at the University of Illinois at Urbana-Champaign. The research was partially funded by Nate Snyder's  NSF CAREER Grant DMS-1454767.

The author is also grateful to Florin Boca for useful comments on the drafts of the paper. Also to the referee for careful reading and a number of useful comments and corrections that improved the paper, especially Proposition \ref{lehner_pureperiod} and Section \ref{alt_leh}.
\bibliographystyle{alpha}
\bibliography{lehner_coding_arxiv}


\end{document}